\theoremstyle{plain}
\newtheorem{proclaim}{Theorem}[section]
\newtheorem{lemma}[proclaim]{Lemma}
\theoremstyle{definition}
\theoremstyle{remark}
\newtheorem{remark}[proclaim]{Remark}
\numberwithin{equation}{section}
\newtheoremstyle{component}{}{}{}{}{\itshape}{.}{.5em}{\thmnote{#3}#1}
    \theoremstyle{component}
\newcommand{\overbar}[1]{\mkern 1.5mu\overline{\mkern-1.5mu#1\mkern-1.5mu}\mkern 1.5mu}
\newcommand{\I}{\textrm{I}}
\newcommand{\II}{\textrm{II}}
\newcommand{\dif}{\mathrm{d}}
\newcommand{\N}{{\bf N}}
\newcommand{\R}{{\bf R}}
\newcommand{\E}{{\bf E}}
\DeclareMathOperator{\Lip}{Lip}
\DeclareMathOperator{\dist}{dist}
\DeclareMathOperator{\spt}{spt}
\DeclareMathOperator{\dive}{div}
\newcommand\numberthis{\addtocounter{equation}{1}\tag{\theequation}}
\renewcommand{\epsilon}{\varepsilon}
\newcommand{\si}{\mathcal{S}_I}
\newcommand{\sii}{\mathcal{S}_{\II}}
\newcommand{\ex}{\E^{x_0}_{\si, \sii}}
\newcommand{\exx}{\E^{x}_{\si, \sii}}
\newcommand{\ui}{u_{ \mathrm{I} }}
\newcommand{\uii}{u_{ \mathrm{II} }}
\markboth{\sectionname}{\sectionname}
\title[Dynamic programming principle related to the $p$-Laplacian]{A dynamic programming principle with continuous solutions related to the $p$-Laplacian, $1 < p < \infty$}
\author{Hans Hartikainen}
\address{University of Jyvaskyla, Department of Mathematics and Statistics, P.O. Box 35, FI-40014 University of Jyvaskyla, Finland. }
\email{hans.k.k.hartikainen@jyu.fi}
\begin{document}
\begin{abstract}
We study a Dynamic Programming Principle related to the $p$-Laplacian for $1 < p < \infty.$ The main results are existence, uniqueness and continuity of solutions.
\end{abstract}
\maketitle

\section{Introduction}
Game-theoretic methods have recently emerged as a novel approach to nonlinear partial differential equations. In particular, tug-of-war games related to $p$-Laplace-type equations have garnered significant attention. The groundwork was laid in the seminal papers \cite{peresinfty, peres}. Game-theoretic views have since led to simplified proof of theorems concerning $p$- and $\infty$-harmonic functions, as in \cite{asinfty, harnack}, as well as stimulated plenty of other related research, as in \cite{ant, arr, caf, pezzorossi, gomezrossi, lewimanf, liuschikorra, mikko, manparros, manrossom, perpetsom, ruo}.

One way to establish the link between a PDE and the game to show that there exists a value function for the game, satisfying a Dynamic Programming Principle (DPP for short). This DPP can then be interpreted as a discrete approximation to the PDE in question. A solution of the PDE can be acquired by taking a suitable limit of solutions of the DPP.   

In this paper, inspired by \cite{kmp, peres}, we propose an orthogonal noise tug-of-war variant related to the $p$-Laplace equation
\[
\Delta_p u = \dive(|\nabla u|^{p-2} \nabla u) = 0
\]
 for $1 < p < \infty$. The related DPP is
\begin{align*}
&u(x) = \frac{1 - \delta(x)}{2} \left[ \sup_{v \in\partial B_\epsilon}\left( \alpha u(x+v)   + \beta \int \! u(x+z) \,\dif \mu_v (z)  \right)  \right. \numberthis \label{DPP} \\& + \left. 
\inf_{v \in \partial B_\epsilon}\left(\alpha u(x+v)   + \beta \int \! u(x+z) \,\dif \mu_v (z)\right) \right] +\delta(x)F(x), 
\end{align*}
where $\epsilon > 0$, $\delta$ is a boundary cutoff function, $F$ the boundary data on the thickened boundary and $\mu_v$ is the uniform $(n-1)$-dimensional probability measure over the $(n-1)$-dimensional closed disk of radius $\epsilon$ orthogonal to the vector $v$. The constants $\alpha, \beta \geq 0$ are determined by $p$ and the dimension $n$ as
\[
\alpha = \frac{p-1}{p+n} \quad \beta = \frac{n+1}{p+n}.
\]
Our results for the DPP also hold in the case $\alpha = 0$. However, the corresponding PDE for this case is not the $1$-Laplace equation.

The boundary correction term allows us to sidestep some subtle measurability and continuity issues typically arising in similar cases. To see the connection between the DPP and the $p$-Laplacian, we refer to \cite{kmp}, where analogous computations were carried out.

For a bounded domain and Lipschitz continuous boundary data, we prove the existence, uniqueness and continuity of solutions of the DPP (\ref{DPP}). To the best of our knowledge, these types of results are original in the range $1 < p < 2$.

In the first section, we use analytic tools to prove an existence of both a lower and an upper semicontinuous solution to the DPP.  We construct a sequence of functions iterating under an operator closely related to the DPP.  We then prove and utilize suitable monotonicity and continuity-preserving properties for the operator to establish favorable properties for the sequence. Finally, we deduce semicontinuity-type properties for the operator to guarantee that the limit of the iteration process satisfies the DPP.

We begin the next section by introducing the game. Among other considerations, we show that the game ends almost surely in finite time, irregardless of the choices made by the players. Utilizing the value functions related to the game, we prove that the two semicontinuous solutions to the DPP must agree, implying the existence of continuous solutions.

The final section centers around a maximum principle and its consequences. A brief argument utilizing the results from the first section shows that if a measurable function is bounded between the extremal values of the boundary data, it has to be continuous. We then show that there cannot exist a measurable function not bounded between these extremal values. Therefore, any measurable solution to the DPP is also continuous. Furthermore, combining this with a brief argument gives an uniqueness result.

\section{Existence of a function satisfying the Dynamic Programming Principle}
In this section, we show the existence of semicontinuous solutions for the DPP (\ref{DPP})

\begin{align*}
&u(x) = \frac{1 - \delta(x)}{2} \left[ \sup_{v \in\partial B_\epsilon}\left( \alpha u(x+v)   + \beta \int \! u(x+z) \,\dif \mu_v (z)  \right)  \right.  \\& + \left. 
\inf_{v \in\partial B_\epsilon}\left(\alpha u(x+v)   + \beta \int \! u(x+z) \,\dif \mu_v (z)\right) \right] +\delta(x)F(x), 
\end{align*}

Let $\Omega$ be a bounded domain in $\R^n$. We denote the outside boundary strip of width $\epsilon$ as
\[
O_\epsilon := \{ x \in \R^n \setminus \Omega \, | \, \dist(x, \partial \Omega) \leq \epsilon \},
\]
the inside boundary strip of width $\epsilon$ as
\[
I_\epsilon := \{ x \in \Omega \, | \, \dist(x, \partial \Omega) \leq \epsilon \}
\]
and the thickened domain
\[
\Omega_\epsilon = \Omega \cup O_\epsilon.
\]
We define a cutoff function in $\Omega_\epsilon$ by setting
\[
\delta(x) := \begin{cases} 0 & \mbox{if }x \in  \Omega \setminus I_\epsilon \\ 1-\epsilon^{-1}\dist(x,\partial\Omega) & \mbox{if } x \in I_\epsilon \\ 1 & \mbox{if } x \in O_\epsilon.\end{cases}
\]
Let $\alpha, \beta > 0$, $\alpha + \beta = 1$.
We define an operator $\tilde{I}$ acting on measurable functions as
\begin{align*}
\tilde{I}u(x) =& \frac{1}{2}\left[ \sup_{v \in\partial B_\epsilon}\left( \alpha u(x+v) + \beta \int \! u(x+z) \,\dif \mu_v (z)  \right)  \right. \\&  + \left. 
\inf_{v \in\partial B_\epsilon}\left( \alpha u(x+v) + \beta \int \! u(x+z) \,\dif \mu_v (z)\right) \right]
\end{align*}
and the boundary-corrected version
\begin{align*}
Iu(x) =& \frac{1 - \delta(x)}{2} \left[ \sup_{v \in\partial B_\epsilon}\left( \alpha u(x+v) + \beta \int \! u(x+z) \,\dif \mu_v (z)  \right)   \right. \\& + \left. 
\inf_{v \in\partial B_\epsilon}\left( \alpha u(x+v) + \beta \int \! u(x+z) \,\dif \mu_v (z)  \right)  \right] + \delta(x)F(x), \numberthis \label{tildei}
\end{align*}
where $\mu_v$ is the uniform $(n-1)$-dimensional probability measure over the $(n-1)$-dimensional closed disk of radius $\epsilon$ orthogonal to the vector $v$. The boundary data $F$ is a real-valued Lipschitz continuous function defined on the boundary strip
\[
\Gamma_\epsilon = \{ x \in \Omega_\epsilon \, | \, \dist(x, \partial \Omega) \leq \epsilon \}.
\]

We now show that the operator $I$ maps Lipschitz functions as to Lipschitz functions. We do this in multiple steps.
\begin{lemma}\label{lipint}
For any Lipschitz function $u: \R^n \to \R$
\[
\left| \int \! u(x+z) \,\dif \mu_v (z) - \int \! u(y+z) \, \dif \mu_v(z) \right| \leq \Lip_u |x - y|,
\]
where
\[
\Lip_{u}(x) := \sup_{x \in \Omega_\epsilon}\limsup_{y \to x} \frac{|u(y)-u(x)|}{|y-x|}.
\]
\end{lemma}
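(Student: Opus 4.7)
The plan is to exploit the fact that the same measure $\mu_v$ appears in both integrals, so the difference can be written as a single integral of a pointwise difference, and then to bound that difference using the Lipschitz constant.

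First, by linearity of the integral, I would rewrite
\[
\int u(x+z)\,\dif\mu_v(z) - \int u(y+z)\,\dif\mu_v(z) = \int \bigl(u(x+z) - u(y+z)\bigr)\,\dif\mu_v(z).
\]
Applying the triangle inequality for integrals then gives
\[
\left|\int \bigl(u(x+z) - u(y+z)\bigr)\,\dif\mu_v(z)\right| \leq \int |u(x+z) - u(y+z)|\,\dif\mu_v(z).
\]

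Second, since $u$ is Lipschitz on $\R^n$ with constant $\Lip_u$, for every $z$ in the support of $\mu_v$,
\[
|u(x+z) - u(y+z)| \leq \Lip_u\,|(x+z)-(y+z)| = \Lip_u\,|x-y|.
\]
This bound is uniform in $z$, so it passes through the integral.

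Finally, since $\mu_v$ is a probability measure, $\int \dif\mu_v(z) = 1$, and the preceding estimate yields
\[
\int |u(x+z) - u(y+z)|\,\dif\mu_v(z) \leq \Lip_u\,|x-y|\cdot \mu_v(\R^{n-1}) = \Lip_u\,|x-y|,
\]
which is the claim. The argument is essentially routine; there is no real obstacle, and the only thing to be mildly careful about is interpreting the notation $\Lip_u$ as the (global) Lipschitz constant of $u$, so that the pointwise bound applies uniformly on the support of $\mu_v$.
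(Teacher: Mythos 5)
Your proof is correct and follows exactly the same route as the paper's: combine the two integrals into one by linearity, apply the triangle inequality for integrals, use the pointwise Lipschitz bound $|u(x+z)-u(y+z)|\leq \Lip_u|x-y|$, and conclude using the fact that $\mu_v$ is a probability measure. Nothing to add.
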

\begin{proof}
The proof is a direct calculation. Namely
\begin{align*}
&\left| \int \! u(x+z) \,\dif \mu_v (z) - \int \! u(y+z) \, \dif \mu_v(z) \right| \\
 & = \left| \int \! u(x+z) -  u(y+z) \, \dif \mu_v(z) \right| \\
& \leq  \int \! \left |u(x+z) -  u(y+z) \right| \, \dif \mu_v(z) \\
& \leq  \int \! \Lip_u \left| x+z-(y-z) \right| \, \dif \mu_v(z) \\
& =  \Lip_u |x-y|.
\end{align*}
For the last step we used the fact that the measure $\mu_v$ is a probability measure.
\end{proof}

We now show the Lipschitz continuity for the supremum and infimum terms.
\begin{lemma}\label{limsupvetalemma}
Assume that $u: \Omega_\epsilon \to \R$ is a Lipschitz function. Then the function 
\[
\overline{W}(x) := \sup_{v \in\partial B_\epsilon}\left[  \alpha u(x+v) + \beta \int \! u(x+z) \,\dif \mu_v (z) \right]
\]
is also a Lipschitz function (defined in $\Omega$). Furthermore, $Lip_{\overline{W}} \leq 3 Lip_u$.
\end{lemma}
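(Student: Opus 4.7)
The plan is to exploit the standard observation that a pointwise supremum of a uniformly Lipschitz family is Lipschitz with the same constant. Writing
\[
f_v(x) := \alpha u(x+v) + \beta \int \! u(x+z) \,\dif \mu_v (z),
\]
so that $\overline{W}(x) = \sup_{v \in \partial B_\epsilon} f_v(x)$, the elementary estimate $|\sup_v f_v(x) - \sup_v f_v(y)| \leq \sup_v |f_v(x) - f_v(y)|$ reduces the problem to bounding the Lipschitz constants of the individual $f_v$ uniformly in $v$.

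For each fixed $v \in \partial B_\epsilon$, the map $x \mapsto u(x+v)$ is just a translate of $u$, hence Lipschitz with constant at most $\Lip_u$. The second term is Lipschitz in $x$ with the same constant $\Lip_u$ by a direct application of Lemma \ref{lipint}. Using $\alpha + \beta = 1$ and the triangle inequality then yields
\[
|f_v(x) - f_v(y)| \leq (\alpha + \beta)\,\Lip_u\, |x-y| = \Lip_u\, |x-y|,
\]
uniformly in $v$. This is actually stronger than the claimed $3\,\Lip_u$; the factor of $3$ in the statement appears to be a convenient over-estimate, perhaps anticipated for uniform use with later terms.

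The only mildly delicate point is that Lemma \ref{lipint} is stated for a function on all of $\R^n$, while $u$ is given only on $\Omega_\epsilon$. This is easily handled by a McShane extension, which preserves the Lipschitz constant. For $x \in \Omega$ and $v \in \partial B_\epsilon$ every evaluation point $x+v$ and $x+z$ (with $|z| \leq \epsilon$) lies in $\Omega_\epsilon$, so the extension and $u$ agree everywhere that matters. No compactness or measurability obstruction arises because the index set $\partial B_\epsilon$ over which the supremum is taken is independent of the base point, and I expect no substantial difficulty beyond this bookkeeping.
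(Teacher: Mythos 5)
Your proof is correct, and it actually proves more than the lemma asks for: via the elementary bound $|\sup_v f_v(x)-\sup_v f_v(y)|\leq\sup_v|f_v(x)-f_v(y)|$ together with $\alpha+\beta=1$ you obtain $\Lip_{\overline{W}}\leq\Lip_u$, which of course implies the stated $\Lip_{\overline{W}}\leq 3\Lip_u$. The paper argues differently: it fixes an $\eta$-near-maximizer $v_\eta$ for the point $x$, shows the intermediate estimate that $v_\eta$ is still a $(2\Lip_u|x-y|+\eta)$-near-maximizer at $y$, and then bounds $|\overline{W}(y)-\overline{W}(x)|$ by $|W(y,v_\eta)-W(x,v_\eta)|+2\Lip_u|x-y|+2\eta$, which accumulates to the constant $3$. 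Both arguments rest on the same mechanism (near-maximizers of the family $v\mapsto W(\cdot,v)$), but your packaging through the standard sup-inequality is tighter and avoids the superfluous factor; the paper's looser constant costs nothing later since Lemma \ref{lipcont} only needs \emph{some} finite bound. Your domain bookkeeping is also right: for $x\in\Omega$ and $|v|=\epsilon$, $|z|\leq\epsilon$ the points $x+v$ and $x+z$ lie in $\Omega_\epsilon$, so the McShane extension changes nothing where the formula is evaluated (the paper silently makes the same identification when it applies Lemma \ref{lipint}, which is nominally stated for functions on all of $\R^n$).
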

\begin{proof}

It suffices to show that there exists a uniform bound for the pointwise Lipschitz constant given by
\[
\Lip_{\overline{W}}(x) := \limsup_{y \to x} \frac{|\overline{W}(y)-\overline{W}(x)|}{|y-x|} \leq  3\Lip_u.
\]
Denote
\begin{equation*}
W(x, v) := \alpha u(x + v) + \beta \int \! u(x+z) \,\dif \mu_{v} (z).
\end{equation*}
Let $\eta > 0$ be arbitrary and choose a vector $v_\eta \in\partial B_\epsilon$ such that
\[
\sup_{v \in\partial B_\epsilon} W(x,v) - W(x, v_\eta) \leq \eta.
\]
Then we must also have  
\begin{equation} \label{limsupveta}
\begin{aligned}
& \sup_{v \in\partial B_\epsilon}W(y, v) - W(y, v_\eta)  \\
\leq & \sup_{v \in\partial B_\epsilon}W(x, v) - W(x, v_\eta) + 2\Lip_u |x-y| \\
\leq &  2\Lip_u |x-y| + \eta,
\end{aligned}
\end{equation}
using the Lemma \ref{lipint} twice for the first inequality.

The Lipschitz estimate can now be acquired as follows
\begin{align*}
& |\overbar{W}(y)-\overbar{W}(x)| \\
\leq & | W(y, v_\eta) - W(x, v_\eta)| + 2\Lip_u |x-y| + 2\eta \\
\leq & 3\Lip_u |x-y| + 2\eta.
\end{align*}
For the first inequality, we used $(\ref{limsupveta})$ and the choice of $v_\eta$, the second inequality utilizes $(\ref{lipint})$. Diving with $|x-y|$ and taking lim\,sup finishes the proof.
\end{proof}
By considering $-u$ in the previous proof, we immediately get the analogous claim for
\[
\underline{W}(x) := \inf_{v \in\partial B_\epsilon}\left[ \alpha \int \! u(x+z) \,\dif \mu_v (z) + \beta u(x+v) \right].
\]

\begin{lemma}\label{lipcont}
If $u \in \Lip(\Omega_\epsilon)$, then  $Iu \in \Lip(\Omega_\epsilon)$.
\end{lemma}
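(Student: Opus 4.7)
The approach is to split $Iu = \tfrac{1}{2}(1-\delta)(\overline{W} + \underline{W}) + \delta F$ and establish Lipschitz continuity on $\Omega_\epsilon$ for each summand separately. By Lemma \ref{limsupvetalemma} and its infimum analogue, $\overline{W}$ and $\underline{W}$ are Lipschitz on $\Omega$ with constants bounded by $3\Lip_u$, and both are bounded by $\|u\|_\infty$ since $u$ is Lipschitz on the bounded domain $\Omega_\epsilon$. Directly from the definition, $\delta$ is Lipschitz on $\Omega_\epsilon$ with constant $1/\epsilon$ and takes values in $[0,1]$, while the boundary data $F$ is Lipschitz and bounded on $\Gamma_\epsilon$.

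For the first summand, when $x, y \in \Omega$ the standard product estimate $|(fg)(x) - (fg)(y)| \leq \|f\|_\infty |g(x)-g(y)| + \|g\|_\infty |f(x)-f(y)|$, applied with $f = (1-\delta)/2$ and $g = \overline{W} + \underline{W}$, immediately yields Lipschitz continuity. When $x, y \in O_\epsilon$ the summand vanishes. In the hybrid case $x \in \Omega$, $y \in O_\epsilon$, the (possibly undefined) value at $y$ is naturally interpreted as zero because $(1-\delta(y))/2 = 0$; since any segment from $x \in \Omega$ to $y \notin \Omega$ crosses $\partial \Omega$, one has $\dist(x, \partial \Omega) \leq |x-y|$ and hence $1 - \delta(x) \leq |x-y|/\epsilon$. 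This gives
\[
\tfrac{1}{2}(1 - \delta(x))\bigl|\overline{W}(x) + \underline{W}(x)\bigr| \leq \tfrac{2}{\epsilon}|x-y|\,\|u\|_\infty,
\]
closing the estimate across the boundary.

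The second summand $\delta F$ is treated symmetrically, with $\delta$ now playing the role of a cutoff that vanishes on $\Omega \setminus I_\epsilon$. The product estimate settles the case $x, y \in \Gamma_\epsilon$; for $x \in \Omega \setminus I_\epsilon$ and $y \in \Gamma_\epsilon$ an analogous geometric argument shows $\delta(y) \leq |x-y|/\epsilon$ (if $y \in I_\epsilon$, then $\dist(y,\partial\Omega) > \epsilon - |x-y|$ because $\dist(x,\partial\Omega) > \epsilon$; if $y \in O_\epsilon$, then $|x-y| \geq \dist(x,\partial\Omega) > \epsilon$), and $\delta(x)F(x) = 0$ at the left endpoint.

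The main obstacle, which the above case split resolves, is the gluing at $\partial \Omega$: neither $\overline{W}, \underline{W}$ nor $F$ are defined on all of $\Omega_\epsilon$, so the product rule cannot be applied globally. The cutoff $\delta$ is calibrated precisely so that the vanishing factor in each summand absorbs the missing factor at rate $1/\epsilon$, producing a uniform Lipschitz bound for $Iu$ on $\Omega_\epsilon$.
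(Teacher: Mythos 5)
Your proof is correct, and it uses the same basic decomposition (split $Iu$ into $\tfrac12(1-\delta)(\overline{W}+\underline{W})$ and $\delta F$, then apply the product estimate together with Lemma \ref{limsupvetalemma} and its infimum analogue) that the paper uses. Where you genuinely diverge is in how the two regions are glued: the paper proves a pointwise Lipschitz bound in $\Omega$, notes $Iu=F$ on $O_\epsilon$, verifies $\lim_{x\to\tilde{x}}Iu(x)=F(\tilde{x})$ for $\tilde{x}\in\partial\Omega$, and then invokes without further argument that continuity plus piecewise Lipschitz continuity yields global Lipschitz continuity. You instead give explicit two-point estimates in the cross-boundary cases, using that the segment from $x\in\Omega$ to $y\notin\Omega$ meets $\partial\Omega$ to get $1-\delta(x)\le|x-y|/\epsilon$ (and symmetrically $\delta(y)\le|x-y|/\epsilon$ for the $\delta F$ term), so that the vanishing cutoff factor absorbs the undefined factor at rate $1/\epsilon$. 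This buys two things the paper leaves implicit: it handles honestly the fact that $\overline{W},\underline{W}$ are only defined on $\Omega$ and $F$ only on $\Gamma_\epsilon$, so the product rule cannot be applied verbatim on all of $\Omega_\epsilon$; and it produces a genuine global two-point Lipschitz bound without appealing to a gluing principle that, for a general (possibly non-convex) bounded domain, would itself require the kind of segment argument you carry out. The price is a slightly longer case analysis, and your constant $\tfrac{2}{\epsilon}\|u\|_\infty$ in the hybrid estimate is a harmless overestimate of the sharp $\tfrac{1}{\epsilon}\|u\|_\infty$.
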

\begin{proof}
Lipschitz continuity in the outer strip $O_\epsilon$ immediately follows, since
\[
(Iu)|_{O_\epsilon} = F|_{O_\epsilon}.
\]
Next, we show Lipschitz continuity in $\Omega$.
Using the fact
\[
\Lip_{fg} \leq |f|\Lip_{g} + |g|\Lip_{f}
\]
we first note that
\[
\Lip_{\delta F}(x) \leq \delta(x) \Lip_F + \epsilon^{-1}||F||_\infty < \infty.
\]
Using
\[
\Lip_{af+bg} \leq |a|\Lip_f + |b|\Lip_g,
\]
Lemma \ref{limsupvetalemma} and the fact \[\tilde{I}u(x) = \frac{1}{2}(\underline{W}(x) + \overline{W}(x)),\]
we have $\Lip_{\tilde{I}u} \leq 3 \Lip_{u}$. Putting these facts together gives
\begin{align*}
\Lip_{Iu} &\leq \Lip_{\tilde{I}u(1-\delta)} + \Lip_{\delta F} \\
&\leq (1-\delta(x))\Lip_{\tilde{I}u} + \epsilon^{-1}||u||_\infty + \delta(x)\Lip_F + \epsilon^{-1}||F||_\infty \\
&\leq \max(3 \Lip_{u}, \Lip_F) + \epsilon^{-1}(||F||_\infty + ||u||_\infty)< \infty,
\end{align*}
implying Lipschitz continuity in $\Omega$.

Furthermore, we have for any $\tilde{x} \in \partial \Omega$
\begin{align*}
\lim_{x \to \tilde{x}} Iu(x) &= \lim_{x \to \tilde{x}} \frac{1 - \delta(x)}{2} \left[ \sup_{v \in\partial B_\epsilon}\left( \alpha u(x+v) + \beta \int \! u(x+z) \,\dif \mu_v (z)  \right)   \right. \\& + \left. 
\inf_{v \in\partial B_\epsilon}\left( \alpha u(x+v) + \beta \int \! u(x+z) \,\dif \mu_v (z)  \right)  \right] + \delta(x)F(x) \\
&= \lim_{x \to \tilde{x}} \delta(x)F(x) = F(\tilde{x}).  
\end{align*}
We have thus verified continuity in $\Omega_\epsilon$ and Lipschitz continuity separately in $\Omega_\epsilon$ and $O_\epsilon$. These results together imply Lipschitz continuity in the whole domain $\Omega_\epsilon$.
\end{proof}
We now choose $u_0(x) \equiv \inf F$ and define \[ u_n(x) := I^n u_0(x). \] The operator $I$ is easily seen to be monotone, i.e. if $u \leq v$, then $Iu \leq Iv$. Since necessarily $u_0 \leq Iu_0$, the sequence $u_n$ is increasing. Furthermore, the following lemma provides an upper bound.
\begin{lemma}
Let $u_n$ be defined as above. Then \[ u_n \leq \sup F \] for any $n \in \N$.
\end{lemma}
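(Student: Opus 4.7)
The natural strategy is induction on $n$. The base case is immediate since $u_0 \equiv \inf F \leq \sup F$. For the inductive step, assume $u_n \leq \sup F$ pointwise, and show that $Iu_n = u_{n+1} \leq \sup F$.

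The key observation is that since $\alpha + \beta = 1$ and $\mu_v$ is a probability measure, the integral operator is an averaging operation. Specifically, for any $v \in \partial B_\epsilon$,
\[
\alpha u_n(x+v) + \beta \int u_n(x+z)\,\dif \mu_v(z) \leq \alpha \sup F + \beta \sup F = \sup F,
\]
so both the supremum and infimum inside the bracket in the definition of $Iu_n$ are bounded above by $\sup F$. Consequently, $\tilde{I}u_n(x) \leq \sup F$. Since also $F(x) \leq \sup F$ trivially, we get
\[
Iu_n(x) \leq \frac{1-\delta(x)}{2}\bigl(\sup F + \sup F\bigr) + \delta(x)\sup F = (1-\delta(x))\sup F + \delta(x)\sup F = \sup F,
\]
completing the induction.

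There is no real obstacle here; the proof is a direct check that the convex-combination structure of $I$ (built from probability averages and the boundary value $F$, which itself is bounded by $\sup F$) preserves the upper bound $\sup F$. The only point requiring a moment's care is that one uses $\alpha + \beta = 1$ together with $\mu_v$ being a probability measure to pass the bound through the sup/inf and the integral simultaneously.
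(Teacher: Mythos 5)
Your proof is correct and follows essentially the same route as the paper: induction on $n$, using that $\alpha+\beta=1$ and that $\mu_v$ is a probability measure so that the sup/inf bracket is bounded by $\sup F$, then combining with $\delta(x)F(x)\leq\delta(x)\sup F$. The paper's own argument is the same, merely written more tersely.
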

\begin{proof}
The proof is by induction. The base case $u_0 = \inf F \leq \sup F$ is immediate. For the general case, consider
\begin{align*}
u_{n+1}(x) &=  \frac{1 - \delta(x)}{2} \left[ \sup_{v \in\partial B_\epsilon}\left( \alpha u_n(x+v) + \beta\int \! u_n(x+z) \,\dif \mu_v (z)   \right]  \right. \\& + \left. 
\inf_{v \in\partial B_\epsilon}\left(\alpha u_n(x+v) + \beta  \int \! u_n(x+z) \,\dif \mu_v (z) \right) \right] + \delta(x)F(x) \\
&\leq  (1 - \delta(x)) \sup F + \delta(x) \sup F , 
\end{align*}
where we applied the induction hypothesis $u_n \leq \sup F$.
 \end{proof} As a pointwise limit of an increasing and bounded sequence, the function
\begin{equation*}
u(x) = \lim_{n \to \infty} u_n(x)
\end{equation*}
is well defined. Furthermore, Lemma $\ref{lipcont}$ implies that the functions $u_n$ are continuous. An increasing limit of continuous functions is lower semicontinuous, thus in particular measurable.  

In the proof of the main claim of this section, we utilize the following lemma.
\begin{lemma}\label{kompjatk}
The functions
\begin{equation*}
W_n(x,v) :=  \alpha u_n(x+v)  + \beta \int \! u_n(x+z) \,\dif \mu_v (z)  
\end{equation*}
are continuous with respect to the variable $v$ for each fixed $x$. 
\end{lemma}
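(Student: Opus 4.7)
The first summand $\alpha u_n(x+v)$ is continuous in $v$ simply because $u_n$ is continuous (in fact Lipschitz, by iterating Lemma \ref{lipcont} starting from $u_0\equiv\inf F$). So the entire task reduces to showing that
\[
v \longmapsto \int u_n(x+z)\,\dif\mu_v(z)
\]
is continuous, and I would attack this via a change of variables that swaps the support of the measures rather than the integrand.

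Fix $v \in \partial B_\epsilon$ and let $v'\in \partial B_\epsilon$ be close to $v$. If $v'\ne -v$, then in the $2$-plane spanned by $v$ and $v'$ there is a unique rotation through the angle between them; extending this by the identity on the orthogonal complement gives $R_{v,v'}\in SO(n)$ with $R_{v,v'}v=v'$ and $\|R_{v,v'}-I\|\to 0$ as $v'\to v$. Because $\mu_v$ is the uniform probability measure on the $(n-1)$-disk of radius $\epsilon$ inside the hyperplane $v^\perp$, and $R_{v,v'}$ carries $v^\perp$ isometrically onto $(v')^\perp$, rotational invariance of Lebesgue measure yields
\[
\mu_{v'} = (R_{v,v'})_*\mu_v.
\]

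Applying the change-of-variables formula for push-forward measures, I obtain
\begin{align*}
\left|\int u_n(x+z)\,\dif\mu_v(z) - \int u_n(x+z)\,\dif\mu_{v'}(z)\right|
&= \left|\int \bigl(u_n(x+z)-u_n(x+R_{v,v'}z)\bigr)\,\dif\mu_v(z)\right| \\
&\leq \Lip_{u_n}\int |z - R_{v,v'}z|\,\dif\mu_v(z) \\
&\leq \Lip_{u_n}\,\epsilon\,\|R_{v,v'}-I\|,
\end{align*}
using that $|z|\le\epsilon$ on the support of $\mu_v$ and that $u_n$ is Lipschitz. Letting $v'\to v$ forces $\|R_{v,v'}-I\|\to 0$, which finishes the proof.

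The only delicate point is the construction of a rotation family $R_{v,v'}$ depending continuously on $v'$ near $v$ and reducing to the identity at $v'=v$; once that is in place, the push-forward identity $\mu_{v'}=(R_{v,v'})_*\mu_v$ is immediate from the rotational symmetry built into the definition of $\mu_v$, and everything else is a one-line Lipschitz estimate. No global continuous selection on all of $\partial B_\epsilon$ is needed because continuity is a local statement at each fixed $v$.
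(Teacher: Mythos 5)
Your proof is correct and follows essentially the same route as the paper: both reduce to the integral term, transport $\mu_{v'}$ onto $\mu_v$ by a rotation carrying one hyperplane to the other, and then apply the Lipschitz estimate for $u_n$ together with the bound $|z|\le\epsilon$ on the support. The only cosmetic difference is that you construct the rotation explicitly (in the $2$-plane spanned by $v$ and $v'$) and bound $|z-R_{v,v'}z|$ by $\epsilon\|R_{v,v'}-I\|$, whereas the paper simply asserts the existence of a rotation $L$ with $|L(z)-z|\le C|z||v_1-v_2|$, yielding the same conclusion.
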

\begin{proof}
The proof only utilizes the Lipschitz continuity on the functions $u_n$. It suffices to show that the function
\[
h(v) = \int \! u(x+z) \,\dif \mu_v (z)
\]
is continuous with any Lipschitz continuous function $u$ and point $x$.

Let $v_1, v_2 \in \partial B_\epsilon$. Denote
\[
(v_1)^\perp := \{ w \in \R^n \, | \, \langle w , v_1 \rangle = 0\},
\]
and similarily for $(v_2)^\perp$. Given these vectors, we can find a rotation $L: (v_1)^\perp \to (v_2)^\perp$ for which
\[|L(z) - z| \leq C|z||v_1 - v_2|, \] 
where the constant $C > 0$  can be chosen independent of vectors $v_1$ and $v_2$.  Using this fact, we can estimate
\begin{align*}
&|h(v_1) - h(v_2)| \\ = &\left| \int \! u(x+z) \,\dif \mu_{v_1} (z) - \int \! u(x+z) \,\dif \mu_{v_2} (z) \right| \\
\leq &\left| \int \! u(x+L(z))  \,\dif \mu_{v_1} (z) - \int \! u(x+z) \,\dif \mu_{v_2} (z) \right| \\
 &+ \int \! |u(x+z) - u(x+L(z))|  \,\dif \mu_{v_1} (z)  \\
\leq  &\left | \int \! u(x+z) - u(x+z) \, \dif \mu_{v_2}(z)  \right| + \int \! \Lip_u |L(z) - z| \,\dif \mu_{v_1} (z) \\
\leq & \int \! \Lip_u C |z| |v_1 - v_2|  \,\dif \mu_{v_1} (z) \leq \Lip_u C \epsilon |v_1 - v_2|.
\end{align*}
The Lipschitz-continuity of $u$ was utilized in the second equality. For the change of integration measure, we used the fact that the determinant of the Jacobian matrix of a rotation is one. The last inequality is due to the inclusion $\spt \mu_{v_2} \subset \overbar{B_\epsilon}.$
\end{proof}
\begin{remark}\label{remark1}

Note that this implies the Lipschitz continuity of the functions
\begin{align*}
& \underline{u_n}(x,v) := \frac{1-\delta(x)}{2}\left[ \alpha  u_n(x+v) + \beta  \int \! u_n(x+z) \,\dif \mu_v (z)\right] \\
+ &  \frac{1 - \delta(x)}{2} \left[ \inf_{\tilde{v} \in\partial B_\epsilon}\left( \alpha u_n(x+\tilde{v}) + \beta  \int \! u_n(x+z) \,\dif \mu_{\tilde{v}} (z)   \right)  \right] + \delta(x)F(x).
\end{align*}
with respect to both variables.
As before, one can verify that this sequence is increasing, and bounded from above by $\sup F$. Furthermore, 
\[
\sup_{v \in\partial B_\epsilon} \lim_{n \to \infty} \underline{u_n}(x,v)  = u(x).
\]
We will use these facts in the proof of Lemma \ref{uniq}.
\end{remark}
\begin{proclaim}
The function $u := \lim_{n \to \infty} I^n u_0$ satisfies the DPP.
\end{proclaim}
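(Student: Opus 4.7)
The plan is to show that $Iu_n(x) \to Iu(x)$ for every $x \in \Omega_\epsilon$ as $n \to \infty$; since $u_{n+1} = Iu_n$ and $u_n \nearrow u$, the identity $u = Iu$ then follows at once. Fix $x$; if $x \in O_\epsilon$ then $\delta(x)=1$, so $Iu(x)=F(x)=Iu_n(x)$ and there is nothing to check. Assume therefore $x \in \Omega$. By the pointwise monotone convergence $u_n \nearrow u$ together with the monotone convergence theorem applied to the integral term, the functions $W_n(x,v)$ of Lemma \ref{kompjatk} satisfy $W_n(x,v) \nearrow W_\infty(x,v)$ for every $v \in \partial B_\epsilon$, where $W_\infty(x,v) := \alpha u(x+v) + \beta \int u(x+z)\,\dif\mu_v(z)$.

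The supremum commutes with this monotone limit by the elementary exchange of suprema: $\lim_n \sup_v W_n(x,v) = \sup_v \sup_n W_n(x,v) = \sup_v W_\infty(x,v)$, with no continuity or compactness required. The infimum, however, is the main obstacle, since the analogous exchange can fail in general. I would handle this step using the compactness of $\partial B_\epsilon$ together with the continuity in $v$ supplied by Lemma \ref{kompjatk}. Each continuous $W_n(x,\cdot)$ attains its infimum at some $v_n \in \partial B_\epsilon$; by compactness, pass to a subsequence with $v_{n_k} \to v^*$. For any fixed $m$ and all $n_k \geq m$, monotonicity gives $\inf_v W_{n_k}(x,v) = W_{n_k}(x, v_{n_k}) \geq W_m(x, v_{n_k})$, and continuity of $W_m(x,\cdot)$ sends the right-hand side to $W_m(x, v^*)$ as $k \to \infty$. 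Letting $m \to \infty$ then yields $\lim_n \inf_v W_n(x,v) \geq W_\infty(x, v^*) \geq \inf_v W_\infty(x,v)$, while the reverse inequality is immediate from $W_n \leq W_\infty$.

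Substituting these two limit identities into the definition of $I$, with $\delta(x)$ and $F(x)$ held fixed throughout, gives $Iu_n(x) \to Iu(x)$, completing the argument. The only non-routine step is the infimum interchange, which is exactly what the continuity of $W_n$ in $v$ (Lemma \ref{kompjatk}) combined with the compactness of $\partial B_\epsilon$ is designed to enable.
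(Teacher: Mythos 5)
Your proposal is correct and follows essentially the same strategy as the paper: prove that both the supremum and the infimum over $v\in\partial B_\epsilon$ commute with the monotone limit $u_n \nearrow u$, the supremum by an elementary exchange of suprema and the infimum by combining the continuity in $v$ from Lemma \ref{kompjatk} with the compactness of $\partial B_\epsilon$. The only difference is cosmetic: for the infimum interchange the paper applies Cantor's intersection lemma to the nested compact sublevel sets $K^n_b = \{ v \in \partial B_\epsilon \mid W_n(x,v) \leq b\}$, whereas you extract a convergent subsequence of minimizers $v_{n_k} \to v^*$; the two devices are interchangeable here and rest on the same two ingredients.
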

\begin{proof}
We first prove the following auxiliary claims.
\begin{multline}\label{infdpp}
\lim_{n \to \infty} \inf_{v \in\partial B_\epsilon} \left[  \alpha u_n(x+v)  + \beta  \int \! u_n(x + z) \, \dif \mu_v(z) \right] \\ = \inf_{v \in\partial B_\epsilon} \left[ \alpha u(x + v)  + \beta \int \!  u(x + z) \, \dif \mu_v(z) \right].
\end{multline}
\begin{multline}\label{supdpp}
\lim_{n \to \infty} \sup_{v \in\partial B_\epsilon} \left[ \alpha u_n(x+v)  + \beta \int \!  u_n(x + z) \, \dif \mu_v(z)\right] \\ = \sup_{v \in\partial B_\epsilon} \left[ \alpha u(x + v) + \beta \int \!  u(x + z) \, \dif \mu_v(z)  \right].
\end{multline}

We begin with the proof of (\ref{supdpp}). Since the sequence of functions $(u_n)_{n \in \N}$ is increasing, the sequence
\[
a_n := \sup_{v \in\partial B_\epsilon} \left[\alpha u_n(x + v) + \beta  \int \! x u_n(x + z) \, \dif \mu_v(z)\right]
\]
must also be increasing. Note also that the sequence is bounded above by 
\[
a := \sup_{v \in\partial B_\epsilon} \left[ \alpha u(x + v) + \beta   \int \!  u(x + z) \, \dif \mu_v(z) \right].
\]
 For any arbitrary $\eta > 0$, pick $\tilde{v} \in\partial B_\epsilon$  such that
\[
 \alpha u(x + \tilde{v}) + \beta  \int \!  u(x + z) \, \dif \mu_{\tilde{v}}(z) \geq a - \eta
\]
Now, by the dominated convergence theorem (recall that $\inf F \leq u_n \leq \sup F$).
\begin{align*}
&\lim_{n \to \infty} \left[ \alpha u_n(x+\tilde{v}) + \beta  \int \!  u_n(x + z) \, \dif \mu_{\tilde{v}}(z)\right] \\ & =  \alpha u(x + \tilde{v}) + \beta  \int \!  u(x + z) \, \dif \mu_{\tilde{v}}(z)   \\
& \geq a - \eta,
\end{align*}
implying (\ref{supdpp}).

For the proof of (\ref{infdpp}), we use Cantor's lemma. Fix $x \in \Omega$ and denote
\[
W_n(x,v) := \alpha u_n(x+v) + \beta \int \! u_n(x+z) \,\dif \mu_v (z).
\]
By Lemma \ref{kompjatk}, the functions $w_n(x,v)$ are continuous with respect to the variable $v$ for each fixed $x$. Thus, the sets
\[
K^n_b := \{ v \in\partial B_\epsilon \, | \, W_n(x,v) \leq b \}
\]
are closed for every $b \in \R$, therefore as subsets of $\partial B_\epsilon$, also compact. Since the function sequence $(u_n)_{n \in \N}$ is increasing, we have $K^{n+1}_b \subset K^n_b$ for all $n \in \N$. Now choose 
\[
 b = \lim_{n \to \infty} \inf_{v \in\partial B_\epsilon} \left[  \alpha  u_n(x + v)  + \beta \int \!  u_n(x + z) \, \dif \mu_v(z) \right]. 
\] 
With this choice, we immediately have $K^n_b \neq \emptyset$ for all $n \in \N$. The conditions for Cantor's lemma are satisfied, which implies
\[
\bigcap_{n = 1}^\infty K^n_b \neq \emptyset.
\]
Let $\tilde{v} \in \cap_{n = 1}^\infty K^n_b$. Then 
\begin{align*}
b &\leq \inf_{v \in B_ \epsilon} \left[ \alpha  u(x+v) + \beta  \int \! u(x+z) \,\dif \mu_v (z) \right] \\ 
&\leq  \alpha u(x+\tilde{v}) + \beta  \int \! u(x+z) \,\dif \mu_{\tilde{v}} (z)  \\
&=  \left[ \alpha u_n(x+\tilde{v}) + \beta  \int \! u_n(x+z) \,\dif \mu_{\tilde{v}} (z) \right] \\
&\leq b. 
\end{align*}
The first inequality follows from the definition of $b$ and the fact that the sequence  $(u_n)_{n \in \N}$ is increasing. For the first equality, we used the monotone convergence theorem. We have thus shown (\ref{infdpp}).

The proof of the main claim is an application of the auxiliary claims proved above
\begin{align*}
u &= \lim_{n \to \infty} I^n u_0 \\
&=   \frac{1 - \delta(x)}{2} \left[ \lim_{n \to \infty} \sup_{v \in\partial B_\epsilon}\left( \alpha u_{n-1}(x+v)  + \beta \int \! u_{n-1}(x+z) \,\dif \mu_v (z)  \right)  \right. \\& + \left. 
\lim_{n \to \infty} \inf_{v \in\partial B_\epsilon}\left(\alpha u_{n-1}(x+v)  + \beta \int \! u_{n-1}(x+z) \,\dif \mu_v (z) \right) \right] + \delta(x)F(x)   \\
&=   \frac{1 - \delta(x)}{2} \left[ \sup_{v \in\partial B_\epsilon}\left( \alpha  u(x+v)  + \beta \int \! u(x+z) \,\dif \mu_v (z) \right)  \right. \\& + \left. 
\lim_{n \to \infty} \inf_{v \in\partial B_\epsilon}\left(\alpha  u(x+v) + \beta \int \! u(x+z) \,\dif \mu_v (z)  \right) \right] + \delta(x)F(x) . \qedhere
\end{align*}
\end{proof}
As a final result, we have
\begin{proclaim}\label{exis}
There exists a lower and an upper semicontinuous solution to the DPP.
\end{proclaim}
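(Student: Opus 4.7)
The plan is to observe that the previous theorem already yields a lower semicontinuous solution, and then to perform the dual construction to obtain an upper semicontinuous one. For the lower semicontinuous solution: the iterates $u_n = I^n u_0$ with $u_0 \equiv \inf F$ are Lipschitz (hence continuous) by Lemma \ref{lipcont}, the sequence is increasing by monotonicity of $I$ together with $u_0 \leq I u_0$, and it is bounded above by $\sup F$. Its pointwise limit $u$ is therefore lower semicontinuous, and the previous theorem establishes that $u$ satisfies the DPP.

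For the upper semicontinuous solution, I would run the symmetric construction: set $\overline{u}_0(x) \equiv \sup F$ and define $\overline{u}_n := I^n \overline{u}_0$. The same Lemma \ref{lipcont} ensures each $\overline{u}_n$ is continuous. Since $I \overline{u}_0 \leq \overline{u}_0$ and $I$ is monotone, the sequence $\overline{u}_n$ is decreasing, and a direct induction (dual to the upper-bound lemma above) shows $\overline{u}_n \geq \inf F$. Hence the pointwise limit $\overline{u} := \lim_{n \to \infty} \overline{u}_n$ exists and, as a decreasing limit of continuous functions, is upper semicontinuous.

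It remains to verify that $\overline{u}$ satisfies the DPP. This mirrors the argument in the previous theorem, with the roles of $\sup$ and $\inf$ interchanged. Specifically, for the $\inf$ term one uses a direct dominated-convergence argument: picking a near-minimizer $\tilde v$ for $\overline{u}$ and passing to the limit inside the bounded integrals shows
\[
\lim_{n \to \infty} \inf_{v \in \partial B_\epsilon} W_n(x,v) = \inf_{v \in \partial B_\epsilon}\left[\alpha \overline{u}(x+v) + \beta \int \overline{u}(x+z)\,\dif\mu_v(z)\right],
\]
where $W_n$ is formed from $\overline{u}_n$. For the $\sup$ term, the sequence of suprema is now decreasing, so I would apply Cantor's lemma to the nested compact sets
\[
K^n_b := \{ v \in \partial B_\epsilon \, | \, W_n(x,v) \geq b \},
\]
which are closed by the continuity in $v$ granted by Lemma \ref{kompjatk}, compact as closed subsets of $\partial B_\epsilon$, and nested (this time because $\overline{u}_{n+1} \leq \overline{u}_n$). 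With $b := \lim_n \sup_v W_n(x,v)$ all are non-empty, and any $\tilde v$ in the intersection yields equality as in the proof of (\ref{infdpp}).

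The main obstacle I anticipate is mostly bookkeeping rather than substance: the sign of the monotonicity is reversed everywhere, so one must carefully check that Cantor's lemma is now applied to super-level sets (rather than sub-level sets), that the decreasing sequence is still bounded so dominated convergence applies, and that the semicontinuity is upper rather than lower. Once these matches are made, the proof of the previous theorem goes through verbatim for $\overline{u}$, establishing that both a lower semicontinuous solution $u$ and an upper semicontinuous solution $\overline{u}$ to the DPP exist.
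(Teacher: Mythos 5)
Your proposal is correct and follows the paper's intended argument exactly: the lower semicontinuous solution is the increasing limit already produced, and the upper one comes from iterating $I$ starting from $u_0 \equiv \sup F$, which the paper dismisses as "an analogous argument." Your careful accounting of how the dualization works — the direct dominated-convergence argument now handling the $\inf$ term and Cantor's lemma applied to the nested super-level sets for the $\sup$ term — is precisely the bookkeeping the paper leaves implicit, and it checks out.
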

The upper semicontinuous solution is acquired and proved to be a solution by performing an analogous argument with the choice $u_0 = \sup F$. In the sequel, we shall denote the lower semicontinuous solution as $\underline{u}$ and the upper semicontinuous solution as $\overbar{u}$.
\begin{remark}\label{uepis}
Since the iteration operator $I$ is monotone, we have
\[
\underline{u} = \lim_{n \to \infty} I^n \inf F \leq \lim_{n \to \infty} I^n \sup F = \overbar{u}.
\]
\end{remark}
\section{Uniqueness}
In this section we shall prove a uniqueness result for the DPP (\ref{DPP}) using a stochastic two-player tug-of-war game.
Throughout this section we shall assume that $\beta > 0$.

To carry out the uniqueness proof, we utilize a tug-of-war game with noise inspired by \cite{kmp, peres}. 
We denote
\[
 \Gamma_\epsilon := \{ x \in \R^n \, | \, \dist(x, \partial \Omega) \leq \epsilon \}.
\]
For the general setting of the game, suppose we are given $\Omega \subset \R^n$  a domain and $F: \Gamma_\epsilon \to \R$ a Lipschitz function and constants $\epsilon > 0$, $0 \leq \alpha, \beta \leq 1$  $\alpha + \beta = 1$. The game involves two players. At the beginning of the game, a game token is
placed at a point $x_0 \in \Omega$. Both players declare their intended move vectors \[ v_1^\mathrm{I}, v_1^{\mathrm{II}} \in \partial B_\epsilon \] and then a fair coin is tossed to determine which
of these vectors will be used in the game. Let us say that Player I won, and his intended move vector was $v_1^\mathrm{I}$. Then the next location $x_1$ of the game token is
determined as follows. With probability $\alpha$, the game token is moved to the point 
\[ x_1 = x_0 + v_1^\mathrm{I}, \]
and with probability $\beta$, a displacement vector $v_1^*$ is chosen from the $(n-1)$-dimensional closed disk that has radius $\epsilon$ and is orthogonal to the vector $v_1^\mathrm{I}$. Thus
the new location of the game token is
\[ x_1 = x_0 + v_1^*.\]
If Player II wins, the same procedure is carried out for the vector $v_1^{\mathrm{II}}$. Similar turns are then played until the game token reaches the inside boundary strip
\[
I_\epsilon := \{ x \in \Omega \, | \, \dist(x, \Omega) \leq \epsilon \}.
\]
If the game token at turn $n$ is at the location $x_n \in I_\epsilon$, the game ends with probability 
\[
 1-\epsilon^{-1}\dist(x,\partial\Omega).
\]
If the game token ever reaches the outer boundary strip 
\[
O_\epsilon := \{ x \in \R^n \setminus \Omega \, | \, \dist(x, \partial \Omega) \leq \epsilon \},
\]
the game is immediately terminated.

Now denote the final location of the game token as $x_\tau$. The outcome of the game is that Player II then pays Player I the amount prescribed by the quantity $F(x_\tau)$.

The choices of intended move vectors are formalized as strategies, which take the current history of the game $(x_1, \dots, x_n)$ as an argument and maps it to an intended move $v \in \partial B_\epsilon$. For our purposes, the strategies can be viewed as maps
\[
 \si, \sii: \cup_{k=1}^\infty (\Omega_\epsilon)^k \to \partial B_\epsilon.
\]
For measure-theoretic reasons, we will only allow Borel-measurable maps as admissible strategies. Now, given a starting point $x_0 \in \Omega$ and two Borel-measurable strategies $\si$ and $\sii$ we can define a measure on the space of game trajectories (using Kolmogorov's extension theorem), which we shall denote as  $\textbf{P}^{x_0}_{\si, \sii}$. 
This can be then used to compute the expected outcome for the game under these conditions, namely $\ex[F(x_\tau)]$.

For a general tug-of-war game defined along similar lines, it is not always clear that the point $x_\tau$ is well defined, since the game may stay in the interior of the domain indefinitely. However, for this variant, the following lemma
shows that $x_\tau$ exists almost surely when $\beta \neq 0$.
\begin{lemma}\label{gameends}
Let $\beta > 0$. If the game domain $\Omega$ is bounded, the game will almost surely end in a finite time, irregardless of the strategies $\si$ and $\sii$.
\end{lemma}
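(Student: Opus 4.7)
The plan is to establish a uniform lower bound, independent of the starting point $x_0 \in \Omega$ and the strategies $\si, \sii$, on the probability of termination within a fixed number of steps, and then iterate via the strong Markov property.

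Decompose each transition into two independent coin-flips: the fair coin deciding between $v^I$ and $v^{II}$, and an independent Bernoulli($\beta$) variable deciding whether the actual move is the deterministic $v$ (with probability $\alpha$) or the random noise $v^*$ (with probability $\beta$). Crucially, this second coin is independent of everything else, in particular of the strategies. Let $A_N$ denote the event that the first $N$ transitions all use noise, so that $\P(A_N) = \beta^N$. For any $v \in \partial B_\epsilon$, the noise $v^*$ is uniformly distributed on the $(n-1)$-dimensional disk of radius $\epsilon$ in $v^\perp$, so
\[
\E[v^* \mid v] = 0 \quad \text{and} \quad \E[|v^*|^2 \mid v] = \tfrac{n-1}{n+1}\epsilon^2,
\]
and neither quantity depends on $v$. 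Consequently, conditional on $A_N$, the sequence $(x_k)_{0 \leq k \leq N}$ is a martingale, and so is
\[
M_k := |x_k|^2 - k \cdot \tfrac{n-1}{n+1}\epsilon^2.
\]

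Fix $R > 0$ with $\Omega \subset B(0,R)$ and apply the optional stopping theorem to $M_k$ at the bounded stopping time $\tau \wedge N$. Using $|x_{\tau \wedge N}| \leq R + \epsilon$ yields
\[
\E[\tau \wedge N \mid A_N] \leq \frac{(R+\epsilon)^2(n+1)}{(n-1)\epsilon^2} =: T_0.
\]
Choosing $N_0 := 2T_0$ and applying Markov's inequality gives $\P(\tau \geq N_0 \mid A_{N_0}) \leq 1/2$, whence
\[
\P(\tau < N_0) \geq \P(A_{N_0}) \cdot \P(\tau < N_0 \mid A_{N_0}) \geq \tfrac{1}{2}\beta^{N_0} =: p_0 > 0,
\]
uniformly in $x_0$ and the strategies. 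Iterating this estimate via the strong Markov property applied at times $N_0, 2N_0, \ldots$ gives $\P(\tau > k N_0) \leq (1 - p_0)^k$, so $\tau < \infty$ almost surely.

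The main technical obstacle is justifying the conditional martingale structure: conditioning on $A_N$ should not alter the law of the trajectory beyond excluding the deterministic steps. This hinges on the $\alpha$-versus-$\beta$ choice being an auxiliary independent randomisation and, crucially, on the fact that the second moment of the noise vector is a fixed positive constant independent of the direction $v$ that the players may adversarially choose — this is precisely where the isotropic-on-orthogonal-disk form of $\mu_v$, together with $n \geq 2$, enters.
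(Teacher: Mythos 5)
Your argument is correct, and its overall skeleton matches the paper's: both proofs reduce to an all-noise streak of a fixed length $N$ (which has probability at least $\beta^N$ uniformly in the strategies, since the $\alpha$-versus-$\beta$ randomisation is independent of everything else), show that during such a streak the token leaves the bounded domain with probability bounded away from zero, and then iterate to get geometric decay of $\P(\tau > kN)$. Where you genuinely differ is in the core quantitative step. The paper hand-picks favourable noise outcomes: it restricts to displacements $w_j$ with $|w_j|\geq \epsilon/2$ and $\langle x_j, w_j\rangle \geq 0$, each event having probability bounded below, so that $|x_j|^2$ increases deterministically by $\epsilon^2/4$ on an event of positive (though exponentially small) probability, forcing exit after $\lceil 16r^2/\epsilon^2\rceil$ steps. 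You instead use the exact first and second moments of $\mu_v$ --- which are independent of the adversarially chosen direction $v$ --- to make $|x_{k\wedge\tau}|^2-(k\wedge\tau)\tfrac{n-1}{n+1}\epsilon^2$ a martingale under the conditional measure, and then apply optional stopping and Markov's inequality. Your route gives a cleaner, explicit bound on the conditional expected exit time and a better per-block termination probability ($\tfrac12\beta^{N_0}$ rather than a product of per-step favourable-event probabilities); the paper's route is more elementary in that it avoids optional stopping. Both arguments implicitly need $n\geq 2$ (for $n=1$ the orthogonal disk degenerates to a point), which you flag and the paper does not. Two small points to tidy: take $N_0=\lceil 2T_0\rceil$ so it is an integer, and state that the martingale property is asserted for the stopped process $(x_{k\wedge\tau})$, since the token freezes once the game terminates; optional stopping at the bounded time $\tau\wedge N$ then goes through exactly as you wrote.
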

\begin{proof}
Without loss of generality, we can assume that the game domain is a ball with radius $r > 0$, centered at the starting point $x_0 = 0$.
We first prove that the game ends in the case $\beta = 1$, and then show how this result implies the general case.

We aim to find a sequence of events of positive probability, such that the modulus of the game location vectors $x_n$ will always grow in a suitable fashion. 

Denote the vectors used in the game as $v_1, v_2, v_3, \dots$ For all the random displacements $w_i \perp v_i$,  $i \in \N$, we shall restrict ourselves to cases in which $|w_i| \geq \epsilon/2,$ since the probability of this happening on a given single turn is positive. 

For $j \geq 0$, we have
\begin{align*}
|x_{j+1}|^2 &= |x_j + w_j|^2  \\
&= \langle x_j + w_j , x_j + w_j \rangle \\
&= |x_j|^2 + |w_j|^2 + 2 \langle x_j, w_j \rangle.
\end{align*}
With at least probability $\frac{1}{2}$, the term $\langle x_j, w_j \rangle$ will be positive. Thus for all $j \geq 0$ we have a positive probability of choosing $w_j$ for which
\[
|x_{j+1}|^2  \geq |x_j|^2+ |w_j|^2 \geq |x_j|^2 + \epsilon^2/4.
\]
Making $\tilde{j} = \lceil 16r^2/\epsilon^2 \rceil $ such consecutive choices results in
\[
|x_{\tilde{j}}|^2 \geq \tilde{j}\epsilon^2/4  \geq 4 r^2,
\]
thus exiting the ball $B_{2r} (0)$, thus in particular the ball $B_{r}(0)$. We have thus established a lower bound on the probability $\theta > 0$ that the game token exits $B_{r}(0)$ before or at time $\tilde{j}$. 

By shifting the starting time to $\tilde{t}$ and letting $x_{\tilde{t}} \in B_{r}(0)$, we have as a corollary that the probability of the game token exiting the ball $B_{2r}(x_{\tilde{t}}) \supset B_{r}(0)$ during turns $\tilde{t} + 1, \tilde{t} + 2, \cdots, \tilde{t} + \tilde{j}$ is at least $\theta$. This establishes an upper bound on the probability that the game hasn't ended before time $k\tilde{j}$ as $(1-\theta)^k$ for any $k \in \N$. Taking $k$ to infinity yields our claim in the case $\beta = 1$.
 
For the general case, consider the following: For any positive integer $j$, an infinite sequence of $(\alpha, \beta)$-biased coin tosses will contain infinitely many $\alpha$-streaks of length $j$. Thus, it suffices to show that during a streak of a sufficient length, in this case $\lceil 16r^2/\epsilon^2 \rceil $, there is a positive probability that the game ends. We can now apply similar reasoning as given above to conclude the proof.
\end{proof}

Now that we have established the well-foundedness of the game, we can discuss the connection to the Dynamic Programming Principle (\ref{DPP}). 

Assume that both players are skilled at the game. Player II offers to play this game starting from point $x$ for some price $u(x)$. How much should Player I be willing to pay and how much should Player II charge? Depending on the point of view, there are two possible answers. For Player I, the answer is 
\[
u_I(x) = \sup_{\si} \inf_{\sii} \exx[F(x_\tau)]
\]
and for Player II
\[
u_{II}(x) = \inf_{\sii} \sup_{\si} \exx[F(x_\tau)].
\]
Using only the definitions of inf and sup, one can readily verify that \begin{equation}\label{peliepis}\ui \leq \uii. \end{equation} 
For our game, we will have $\ui = \uii$, which we will prove along Theorem \ref{uniq}.

Our aim is to relate the game to the DPP (\ref{DPP}) heuristically speaking as follows: Starting a game from point $x$ and evaluating all the possible outcomes for the next location of the game token, we should recieve the equation
\begin{align*}
& \ui(x) =   \frac{1 - \delta(x)}{2} \left[ \sup_{v \in\partial B_\epsilon}\left( \alpha \ui(x+v)  + \beta  \int \! \ui(x+z) \,\dif \mu_v (z) \right)  \right. \\& + \left. 
\inf_{v \in\partial B_\epsilon}\left(\alpha \ui(x+v) + \beta \int \! \ui(x+z) \,\dif \mu_v (z)  \right) \right] + \delta(x)F(x),
\end{align*} i.e. $\ui$ satisfies the DPP given in (\ref{DPP}). A similar claim should also hold for $\uii.$  We prove this as a by-product of Theorem \ref{uniq}.  

For the uniqueness proof, we will need to assign a strategy, which we will need to check to be Borel. To this end, we utilize the following special case of a measurable selection theorem due to Sch\"al, see \cite{schal1} and \cite{schal2}.
\begin{proclaim}\label{schal}
Let $T$ be a topological space and $Y$ a compact metric space. Furthermore, let $v: T \times Y \to \R$ be a pointwise limit of a non-increasing sequence $(v_n)$, such that for each $t \in T$ and $n \in \N$, the function $v_n(t,\cdot)$ is continuous. Then there exists a Borel-measurable function $g: T \to Y$ such that
\[
v(t, g(t)) = \sup_{y \in Y}v(t,y)
\]
for every $t \in T$.
\end{proclaim}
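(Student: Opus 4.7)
The plan is to identify the maximizer set $A(t) := \{y \in Y : v(t,y) = M(t)\}$, where $M(t) := \sup_{y \in Y} v(t,y)$, as a countable intersection of closed sets and then to pick a point from $A(t)$ coherently in $t$ by a lexicographic refinement using a countable dense subset of $Y$.

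First I would show that $A(t)$ is nonempty and compact. The function $v(t,\cdot)$ is the pointwise non-increasing limit of continuous functions $v_n(t,\cdot)$, hence upper semicontinuous in $y$; on the compact space $Y$ it attains its maximum, and the maximizer set $A(t)$ is closed. More usefully, setting $K_n(t) := \{y \in Y : v_n(t,y) \geq M(t)\}$ gives a nested decreasing sequence of nonempty closed subsets of $Y$ (nonempty because $A(t) \subset K_n(t)$, closed by continuity of $v_n(t,\cdot)$, nested because $v_n \searrow v$), and Cantor's intersection theorem in the compact space $Y$ yields $A(t) = \bigcap_n K_n(t)$.

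Next I would construct the selector $g$ by a lexicographic choice. Fix a countable dense sequence $\{y_k\}_{k \geq 1}$ in $Y$ and set $A_0(t) := A(t)$. Recursively define $A_m(t) := A_{m-1}(t) \cap \overbar{B}(y_{k_m(t)}, 2^{-m})$, where $k_m(t)$ is the smallest index $k$ such that $A_{m-1}(t) \cap \overbar{B}(y_k, 2^{-m}) \neq \emptyset$ (such $k$ exists by density). Each $A_m(t)$ is nonempty and compact with diameter at most $2^{1-m}$, so $\bigcap_{m \geq 0} A_m(t) \subset A(t)$ is a singleton, which I declare to be $g(t)$; then $v(t, g(t)) = M(t) = \sup_{y \in Y} v(t,y)$ by construction.

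The main obstacle will be Borel measurability of $g$. It suffices to show that each $k_m(\cdot)$ is Borel, and by induction on $m$ this reduces to showing that for every closed $F \subset Y$ the set $\{t \in T : A(t) \cap F \neq \emptyset\}$ is Borel. Using $A(t) = \bigcap_n K_n(t)$ and Cantor's theorem applied to the decreasing family of closed sets $\{K_n(t) \cap F\}_n$ in compact $Y$, this set equals $\bigcap_n \{t : \sup_{y \in F} v_n(t,y) \geq M(t)\}$; approximating the suprema by suprema over countable dense subsets of $F$ and of $Y$ via continuity of $v_n(t,\cdot)$ in $y$, and writing $M(t) = \inf_n \sup_{y \in Y} v_n(t,y)$, this reduces to countable operations on the slice functions $t \mapsto v_n(t,y_k)$, whose Borel measurability in $t$ holds in the intended applications and is the one place where any structure on $T$ beyond being a topological space is tacitly used.
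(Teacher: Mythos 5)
The paper does not actually prove this theorem: it is quoted as a special case of a selection theorem of Sch\"al, and the reader is referred to \cite[Theorem 5.3.1]{srivastava} for a proof. So there is no in-paper argument to compare against; what you have written is a correct, self-contained proof along the standard lines. The identification $A(t)=\bigcap_n K_n(t)$ with $K_n(t)=\{y\in Y: v_n(t,y)\ge M(t)\}$ is right (these are nonempty, closed and nested, and the intersection equals the argmax set simply because $v=\inf_n v_n$; Cantor's theorem is what you really need later, to see that $A(t)\cap F\neq\emptyset$ iff $K_n(t)\cap F\neq\emptyset$ for all $n$), the lexicographic refinement yields a single-valued selection $g(t)=\lim_m y_{k_m(t)}$, and Borel measurability of $g$ correctly reduces, by your induction, to Borel-ness of $\{t: A(t)\cap F\neq\emptyset\}$ for closed $F$, which your countable-dense-subset computation handles; the auxiliary identity $M(t)=\inf_n\sup_{y\in Y}v_n(t,y)$ that you use there does hold, by the same nested-compact-sets argument. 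The one genuine issue is the one you flag yourself: as literally stated the theorem imposes no condition on the dependence of $v_n$ on $t$, and then the conclusion is false --- take $Y=\{0,1\}$, $v_n=v$ with $v(t,0)=\mathbf{1}_E(t)$ and $v(t,1)=1-\mathbf{1}_E(t)$ for a non-Borel set $E\subset T$; the unique maximizer is not a Borel function of $t$. Some measurability of $t\mapsto v_n(t,y)$ must be added, as in Sch\"al's original hypotheses; in the paper's application this is harmless, since the functions $\underline{u_n}(x,v)$ of Remark \ref{remark1} are Lipschitz in both variables. With that hypothesis made explicit, your proof is complete.
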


For a more general formulation and proof, see for example \cite[Theorem 5.3.1]{srivastava}. The version for the limit of a non-decreasing sequence is an immediate corollary. We shall use this claim for the functions 
\begin{align*}
& \underline{u_n}(x,v) := \frac{1-\delta(x)}{2}\left[ \alpha  u_n(x+v) + \beta  \int \! u_n(x+z) \,\dif \mu_v (z)\right] \\
+ &  \frac{1 - \delta(x)}{2} \left[ \inf_{\tilde{v} \in\partial B_\epsilon}\left( \alpha u_n(x+\tilde{v}) + \beta  \int \! u_n(x+z) \,\dif \mu_{\tilde{v}} (z)   \right)  \right] + \delta(x)F(x).
\end{align*} defined in Remark \ref{remark1}, where also the prerequisites for using Theorem \ref{schal} were shown.
\begin{proclaim}\label{uniq}
For the function $\underline{u}$ given in Theorem \ref{exis}, we have $\uii  \leq \underline{u}$.
\end{proclaim}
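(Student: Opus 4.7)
The plan is to construct a Borel-measurable strategy $\sii^*$ for Player II that realizes the infimum in the DPP for $\underline{u}$ at every point, and to deduce
\[
\E^{x_0}_{\si, \sii^*}[F(x_\tau)] \leq \underline{u}(x_0)
\]
for every Borel strategy $\si$ by an optional stopping argument. Taking $\sup_\si$ followed by $\inf_\sii$ in the definition of $\uii$ then yields $\uii(x_0) \leq \underline{u}(x_0)$.

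For the strategy, apply Theorem \ref{schal} (its non-decreasing version) to the sequence $\underline{u_n}(x,v)$ of Remark \ref{remark1}. This sequence is continuous in $v$ by Lemma \ref{kompjatk} and non-decreasing in $n$, so Schal produces a Borel-measurable map $s^*: \Omega_\epsilon \to \partial B_\epsilon$ attaining the infimum of $\lim_n \underline{u_n}(x,\cdot)$; combined with the identity $\lim_n \underline{u_n}(x,v) = \frac{1-\delta(x)}{2}G_\infty(x,v) + \frac{1-\delta(x)}{2}\inf_{\tilde v} G_\infty(x,\tilde v) + \delta(x)F(x)$ (using (\ref{infdpp}) for the constant-in-$v$ term), this map in fact realizes
\[
G_\infty(x, s^*(x)) = \inf_{v \in \partial B_\epsilon} G_\infty(x, v),
\]
where $G_\infty(x, v) := \alpha\,\underline{u}(x+v) + \beta \int \underline{u}(x+z)\,\dif\mu_v(z)$. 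Set Player II's stationary Markov strategy by $\sii^*(x_0, \ldots, x_n) := s^*(x_n)$.

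Fix any Borel $\si$ and work under $\P^{x_0}_{\si, \sii^*}$. Let $H(x) := \tfrac{1}{2}\bigl[\sup_v G_\infty(x,v) + \inf_v G_\infty(x,v)\bigr]$, and consider the process
\[
N_n := H(x_n)\,\mathbf{1}_{\{\tau > n\}} + F(x_\tau)\,\mathbf{1}_{\{\tau \leq n\}}.
\]
By the DPP, $\E[N_0] = (1-\delta(x_0))H(x_0) + \delta(x_0)F(x_0) = \underline{u}(x_0)$. On $\{\tau > n\}$, conditioning on the play-induced distribution of $x_{n+1}$ and then on the random termination coin at time $n+1$ (which gives the factor $\delta(y)F(y) + (1-\delta(y))H(y) = \underline{u}(y)$ at the intermediate position $y = x_{n+1}$, by the DPP for $\underline{u}$) yields
\[
\E[N_{n+1} \mid \F_n] = \tfrac{1}{2}\bigl[G_\infty(x_n, v_I) + G_\infty(x_n, s^*(x_n))\bigr] \leq H(x_n) = N_n,
\]
where $v_I$ is Player I's intended move at turn $n+1$; the inequality uses the minimizing property of $s^*$ together with $G_\infty(x_n, v_I) \leq \sup_v G_\infty(x_n, v)$. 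On $\{\tau \leq n\}$ the process is constant, so $(N_n)$ is a supermartingale. Since $\inf F \leq N_n \leq \sup F$ and $\tau < \infty$ almost surely by Lemma \ref{gameends}, bounded convergence and optional stopping give $\E[N_\tau] \leq \E[N_0] = \underline{u}(x_0)$, and $N_\tau = F(x_\tau)$ by construction.

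The main obstacle is securing the Borel measurability of $\sii^*$: the limit $G_\infty(x, \cdot)$ need not be continuous in $v$, so its infimum cannot be realized by a naive pointwise choice. Theorem \ref{schal}, applied to the approximants $\underline{u_n}$, is the essential tool, and the convergence of the inner infima from (\ref{infdpp}) is needed to identify the selector as a minimizer of $G_\infty(x, \cdot)$. A secondary technical point is the correct coupling of the random termination in $I_\epsilon$ with the martingale: it is precisely the identity $\underline{u} = (1-\delta)H + \delta F$ that makes $N$, defined with the continuation value $H(x_n)$ rather than $\underline{u}(x_n)$ on $\{\tau > n\}$, behave as a supermartingale.
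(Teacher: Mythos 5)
Your proposal is correct and follows essentially the same route as the paper: a Borel minimizing strategy for Player II obtained from Theorem \ref{schal} applied to the approximants $\underline{u_n}$, a supermartingale built from $\underline{u}$ along the game trajectory, and the optional stopping theorem combined with Lemma \ref{gameends}. Your explicit bookkeeping of the random termination via the process $N_n$ (with the continuation value $H$ on $\{\tau>n\}$) is a slightly more careful rendering of the same computation the paper performs directly on $M_k=\underline{u}(x_k)$.
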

\begin{proof}

We prescribe the Borel-measurable strategy $\sii^*$ given by Theorem \ref{schal} to Player II: At point $x_k$, a vector $v$ is chosen such that
\begin{align*}
&  \alpha  \underline{u}(x_k+v) + \beta  \int \! \underline{u}(x_k+z) \,\dif \mu_v (z)  \\
&= \inf_{v \in \partial B_\epsilon}\left[ \alpha \underline{u}(x_k+v)  + \beta  \int \! \underline{u}(x_k+z) \,\dif \mu_v (z) \right].
\end{align*}

In the following, we denote the vectors chosen by Player I and II as $v_k^\mathrm{I}$ and $v_k^{\mathrm{II}}$, respectively. Irregardless of Player I's choice of strategy $\si$, we have

\begin{align*}
&\E_{\si, \sii^*}^{x_0}[\underline{u}(x_{k+1}) \,|x_0, \dots, x_k] \\
&=    \frac{1 - \delta(x)}{2} \left\{ \left( \alpha \underline{u}(x_k+v_k^\mathrm{I})  + \beta  \int \! \underline{u}(x_k+z) \,\dif \mu_{v_k^\mathrm{I}} (z)\right)  \right.  \\& + \left.
\left(\alpha \underline{u}(x_k+v_k^{\mathrm{II}}) + \beta  \int \! \underline{u}(x_k+z) \,\dif \mu_{v_k^{\mathrm{II}}} (z)\right) \right\} + \delta(x_k)F(x_k)\\
& \leq	 \frac{1 - \delta(x_k)}{2} \left\{ \sup_{v \in  \partial B_\epsilon}\left( \alpha  \underline{u}(x_k+v) + \beta  \int \! \underline{u}(x_k+z) \,\dif \mu_{v} (z)\right)  \right.  \\&
+ \left. \inf_{v \in \partial B_\epsilon}\left(\alpha \underline{u}(x_k+v) + \beta \int \!\underline{u}(x_k+z) \,\dif \mu_{v} (z)  \right) \right\} + \delta(x_k)F(x_k) \\
&= \underline{u}(x_{k}).
\end{align*}

i.e. the stochastic process
\[
M_k = \underline{u}(x_k) 
\]
is a supermartingale. Since by Lemma \ref{gameends} we know that the game ends almost surely, we can now use the optional stopping theorem to deduce that
\[
\begin{split}
u_\II(x_0)&= \inf_{S_{\II}}\sup_{S_{\I}}\,\E_{S_{\I},S_{\II}}^{x_0}[F(x_\tau)] \leq \sup_{S_\I} \E_{S_\I,
S^*_\II}^{x_0}[F(x_\tau)]\\ 
&=\sup_{S_\I}  \E^{x_0}_{S_\I, S^*_\II}[\underline{u} (x_0)]=\underline{u}(x_0),
\end{split}
\]
which proves our claim.
\end{proof}
The following analogous result is proven in the same fashion.
\begin{proclaim}\label{uniq2}
For the function $\underline{u}$ given in Theorem \ref{exis}, we have $ \overbar{u} \leq \ui$.
\end{proclaim}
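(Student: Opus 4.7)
The plan is to mirror the proof of Theorem \ref{uniq} with the roles of the two players exchanged. I would construct a Borel-measurable strategy $\si^*$ for Player I, show that $\overbar u(x_k)$ is a submartingale under $\P^{x_0}_{\si^*,\sii}$ for any Borel strategy $\sii$, and close the argument by optional stopping, giving $u_\I(x_0)\ge\overbar u(x_0)$.

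For the strategy, I would start from the decreasing iteration $u_0\equiv\sup F$, $u_n:=I^n u_0$, whose limit is $\overbar u$ as in Theorem \ref{exis}. The arguments of Section 2 go through verbatim to show that $(u_n)$ is non-increasing, uniformly bounded, and Lipschitz (Lemma \ref{lipcont}), and that the family
\begin{align*}
\overbar{u_n}(x,v) &:= \frac{1-\delta(x)}{2}\left[\alpha u_n(x+v)+\beta\int u_n(x+z)\,\dif\mu_v(z)\right] \\
&\quad + \frac{1-\delta(x)}{2}\sup_{\tilde v\in\partial B_\epsilon}\left[\alpha u_n(x+\tilde v)+\beta\int u_n(x+z)\,\dif\mu_{\tilde v}(z)\right]+\delta(x)F(x)
\end{align*}
is continuous in $v$ for each fixed $x$ (Lemma \ref{kompjatk}) and non-increasing in $n$. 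Theorem \ref{schal} then applies directly (rather than via its non-decreasing corollary) and furnishes a Borel-measurable $\si^*\colon\Omega_\epsilon\to\partial B_\epsilon$ attaining
\[
\alpha\overbar u(x+\si^*(x))+\beta\int\overbar u(x+z)\,\dif\mu_{\si^*(x)}(z) = \sup_{v\in\partial B_\epsilon}\left[\alpha\overbar u(x+v)+\beta\int\overbar u(x+z)\,\dif\mu_v(z)\right].
\]

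Next, I would verify the submartingale property. With Player I playing $\si^*$ and Player II playing any $\sii$, the Player I term in the conditional expectation $\E^{x_0}_{\si^*,\sii}[\overbar u(x_{k+1})\mid x_0,\dots,x_k]$ attains the supremum while the Player II term is at least the infimum. Combining with $\delta(x_k)F(x_k)$ and the DPP satisfied by $\overbar u$ yields
\[
\E^{x_0}_{\si^*,\sii}[\overbar u(x_{k+1})\mid x_0,\dots,x_k]\ge\overbar u(x_k).
\]
By Lemma \ref{gameends} the game terminates almost surely, and since $\overbar u$ is bounded and agrees with $F$ at the terminal state, the optional stopping theorem applied to this submartingale gives $\overbar u(x_0)\le\E^{x_0}_{\si^*,\sii}[F(x_\tau)]$ for every $\sii$. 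Taking $\inf_{\sii}$ and then $\sup_\si$ produces $\overbar u(x_0)\le u_\I(x_0)$, as claimed.

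The only point requiring care, and the one I expect to be the main obstacle, is the $\sup$--$\lim$ identity underlying the conclusion of Theorem \ref{schal} in this direction: since the family $\overbar{u_n}(x,\cdot)$ is non-increasing rather than non-decreasing, one cannot simply mimic the dominated convergence argument used for identity (\ref{supdpp}). Instead, one should invoke a compactness/Cantor's-lemma argument on $\partial B_\epsilon$ dual to the one used for (\ref{infdpp}), with the roles of $\inf$ and $\sup$ swapped; this is exactly the content of Theorem \ref{schal} as stated.
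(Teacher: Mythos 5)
Your proposal is correct and is precisely what the paper intends: Theorem \ref{uniq2} is stated with the one-line justification that it is ``proven in the same fashion'' as Theorem \ref{uniq}, i.e.\ by exchanging the roles of the players, giving Player I the Borel-measurable maximizing strategy from Theorem \ref{schal} (which, as you note, applies directly to the non-increasing iteration from $u_0\equiv\sup F$), verifying the submartingale property for $\overbar{u}(x_k)$, and concluding by optional stopping. Your closing remark about needing the Cantor's-lemma argument (the analogue of (\ref{infdpp}) with $\inf$ and $\sup$ swapped) rather than dominated convergence for the decreasing sequence is an accurate and worthwhile observation, but it stays within the same line of argument as the paper.
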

Combining the inequality (\ref{peliepis}) and Remark \ref{uepis} with Theorems \ref{uniq} and \ref{uniq2}, we have
\[
\uii \leq \underline{u} \leq \overbar{u} \leq \ui \leq \uii,
\]
implying \begin{equation}\label{ovatsamat}\ui = \uii \end{equation} and thus
\begin{proclaim}\label{jatkyk}
There exists a continuous solution to the DPP (\ref{DPP}). 
\end{proclaim}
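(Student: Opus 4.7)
The plan is to show that the lower semicontinuous solution $\underline{u}$ and the upper semicontinuous solution $\overbar{u}$ produced in Theorem \ref{exis} are in fact the same function. Once that is established, the common function will be simultaneously lower and upper semicontinuous, hence continuous, and since both $\underline{u}$ and $\overbar{u}$ satisfy the DPP by Theorem \ref{exis}, it will automatically be a continuous solution of (\ref{DPP}).

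The first step is to concatenate the four bounds already at our disposal. From Theorem \ref{uniq} we have $\uii \leq \underline{u}$, from Remark \ref{uepis} we have $\underline{u} \leq \overbar{u}$, from Theorem \ref{uniq2} we have $\overbar{u} \leq \ui$, and from (\ref{peliepis}) we have $\ui \leq \uii$. Chaining these inequalities yields
\[
\uii \leq \underline{u} \leq \overbar{u} \leq \ui \leq \uii,
\]
which forces equality at every link. In particular $\underline{u}(x) = \overbar{u}(x)$ for every $x \in \Omega_\epsilon$, and I denote the common function by $u$.

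The second step is a pointwise continuity argument for $u := \underline{u} = \overbar{u}$. Lower semicontinuity of $\underline{u}$ gives $\liminf_{y \to x} u(y) \geq u(x)$, while upper semicontinuity of $\overbar{u}$ gives $\limsup_{y \to x} u(y) \leq u(x)$. Since always $\liminf_{y\to x} u(y) \leq \limsup_{y \to x} u(y)$, both inequalities collapse to $\lim_{y \to x} u(y) = u(x)$ at every $x \in \Omega_\epsilon$, so $u \in C(\Omega_\epsilon)$. Because $u$ coincides with $\underline{u}$, which satisfies (\ref{DPP}), $u$ is the desired continuous solution.

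I do not expect any substantive obstacle at this final stage. The heavy lifting is already distributed across Theorems \ref{uniq} and \ref{uniq2}, which themselves rely on the measurable selection supplied by Theorem \ref{schal} and on the almost sure termination of the game in Lemma \ref{gameends}. Granted those inputs, the present statement reduces to the squeeze above, combined with the elementary observation that a function which is simultaneously lower and upper semicontinuous must be continuous.
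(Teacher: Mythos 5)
Your proposal is correct and follows essentially the same route as the paper: chaining $\uii \leq \underline{u} \leq \overbar{u} \leq \ui \leq \uii$ from Theorems \ref{uniq}, \ref{uniq2}, Remark \ref{uepis} and (\ref{peliepis}) to force $\underline{u} = \overbar{u}$, and observing that a function that is both lower and upper semicontinuous is continuous.
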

\section{Measurable uniqueness}
In this section, we prove that there can exist only one measurable function satisfying the DPP with prescribed boundary conditions. As a consequence of this and the previous Theorem \ref{jatkyk}, any measurable function satisfying the DPP must also be continuous. Throughout this section, we assume $\beta \neq 0$.
\begin{proclaim}
Let $u: \Omega_\epsilon \to \R$ be a measurable function satisfying the DPP (\ref{DPP}). Then for all $x \in \Omega_\epsilon$
\[
\inf F \leq u(x) \leq \sup F
\]
\end{proclaim}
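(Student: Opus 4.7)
The plan is to argue by contradiction: suppose $M := \sup_{\Omega_\epsilon} u > \sup F$, set $\zeta := M - \sup F > 0$, and derive an impossibility by propagating the property that $u$ is close to $M$ outward along a chain of $\epsilon$-steps until the chain reaches the outer strip $O_\epsilon$, where $u = F \leq \sup F$ forces a contradiction. The analogous lower bound $u \geq \inf F$ will follow by applying the same argument to $-u$ with boundary data $-F$.

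The first step is a localization: if $u(x) > M - \eta$ for some $\eta \in (0, \zeta/2]$, then $x \in \Omega$ (because $u = F \leq \sup F < M - \eta$ on $O_\epsilon$), and the DPP $u(x) = (1-\delta(x))\tilde{I}u(x) + \delta(x)F(x)$ combined with $\tilde{I}u(x) \leq M$ and $F(x) \leq M - \zeta$ yields first $\delta(x)\zeta < \eta$ and then $\tilde{I}u(x) > M - 2\eta$. Writing $\tilde{I}u(x) = \frac{1}{2}(\sup_v W(x,v) + \inf_v W(x,v))$ with $W(x,v) \leq M$ gives $\inf_v W(x,v) > M - 4\eta$, so for every $v \in \partial B_\epsilon$,
\[
\alpha u(x+v) + \beta \int u(x+z)\,\dif \mu_v(z) > M - 4\eta.
\]
When $\alpha > 0$, combining this with the trivial bound $\int u\,\dif\mu_v \leq M$ gives $u(x+v) > M - 4\eta/\alpha$ for every $v \in \partial B_\epsilon$, so the entire sphere $\partial B_\epsilon(x)$ lies in $\{u > M - C\eta\}$ with $C := 4/\alpha$.

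For the chaining step, fix an escape direction $d$; boundedness of $\Omega$ yields a finite $N = N(\Omega, \epsilon)$ such that an $\epsilon$-step chain moving in direction $d$ must exit $\Omega$ within $N$ hops. Choose $\eta_0 > 0$ with $C^N \eta_0 < \zeta$, and pick $x_0 \in \Omega$ with $u(x_0) > M - \eta_0$; such an $x_0$ exists because $M = \sup u$ while $u \leq \sup F < M$ on $O_\epsilon$. Iterating the spreading step produces $x_1, \dots, x_N$ with $u(x_k) > M - \eta_k$ and $\eta_k \leq C^k \eta_0 < \zeta$, and by construction $x_N \in O_\epsilon$. Then $u(x_N) = F(x_N) \leq M - \zeta$ contradicts $u(x_N) > M - \eta_N > M - \zeta$.

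The main obstacle is the case $\alpha = 0$, $\beta = 1$, in which only the integral bound $\int u(x+z)\,\dif\mu_v(z) > M - 4\eta$ is available. Here one combines a Chebyshev-type estimate on the bad set $\{z : u(x+z) < M - \lambda\}$ with the geometric fact that, for any chosen escape direction $d$ and any $v \perp d$, a fixed positive fraction $c_n$ of the disk $v^\perp \cap \overline{B_\epsilon}(0)$ consists of points $z$ with $z \cdot d \geq \epsilon/4$. Intersecting the two large-$\mu_v$-measure sets produces a point $z$ with $|z| \leq \epsilon$, $z \cdot d \geq \epsilon/4$, and $u(x+z) > M - C'\eta$, so the chain can be continued with a slightly larger propagation constant $C' = C'(n)$, and the rest of the argument proceeds unchanged.
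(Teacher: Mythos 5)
Your proof is correct, but it runs on a different engine than the paper's. The paper works only with the supremum term: it absorbs $\alpha u(x_1+v)\le \sup u\le u(x_1)+\eta_1$ to obtain $u(x_1)\le \sup_v\int u(x_1+z)\,\dif\mu_v(z)+\alpha\eta_1/\beta$, picks a near-optimal $v_1$, and must then work with whatever orthogonal disk that $v_1$ determines; outward progress is forced by restricting to the half of the disk where $\langle x_1,x_2-x_1\rangle\ge 0$ and rerunning the $|x_j|^2$-growth argument of Lemma \ref{gameends}, the error inflating by $(4+4\alpha)/\beta$ per step. You instead exploit the \emph{infimum}: since $u(x)$ is the average of the sup- and inf-terms and both are $\le M$, nearness of $u(x)$ to $M$ forces $W(x,v)>M-4\eta$ for \emph{every} $v$, which buys a free choice of escape direction. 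This lets you (i) march in a straight line and replace the quadratic growth argument by plain boundedness of $\Omega$, (ii) fold the strip $I_\epsilon$ into the same step via the observation $\delta(x)\zeta<\eta$, whereas the paper handles $I_\epsilon$ by a separate computation at the end, and (iii) when $\alpha>0$ dispense with the measure $\mu_v$ altogether, since the whole sphere $\partial B_\epsilon(x)$ lands in the superlevel set (that branch does not even use $\beta>0$, while the paper's division by $\beta$ does). Your $\alpha=0$ branch --- Chebyshev on the bad set against a fixed fraction $c_n$ of the disk pointing in the escape direction --- is the closest in spirit to the paper's selection of $x_2$ from $S_1$, and it is sound. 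Two small points: take $C^N\eta_0\le \zeta/2$ rather than $<\zeta$, so the localization step (which needs $\eta\le\zeta/2$) stays applicable along the entire chain; and note that both your argument and the paper's implicitly assume $\sup u<\infty$, which a merely measurable solution does not a priori provide.
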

\begin{proof}
We consider the latter inequality since the former is proved in a similar fashion.
Assume towards a contradiction that $\sup u > \sup F$. Then for any ${\eta}_1 > 0$ there exists a point $x_1 \in \Omega_\epsilon$ such that
\[
\sup F < u(x_1) 
\] 
and also
\[
u(x_1) > \sup u - {\eta}_1.
\]

We shall first consider the case $x_1 \in \Omega$. For any $x_1 \in \Omega$, we deduce from the DPP that
\begin{align*}
&u(x_1) =   \frac{1}{2}\left[ \sup_{v \in \partial B_\epsilon}\left(\alpha u(x_1+v) + \beta\int \!  u(x_1+z) \,\dif \mu_v (z) \right) \right. \\& + \left. \inf_{v \in \partial B_\epsilon}\left(\alpha u(x_1+v) + \beta  \int \! u(x_1+z) \,\dif \mu_v (z)  \right) \right] \\
&\leq \sup_{v \in \partial B_\epsilon}\left(\alpha u(x_1+v)  + \beta  \int \!  u(x_1+z) \,\dif \mu_v (z)\right)  \\
&\leq \alpha u(x_1) + \sup_{v \in \partial B_\epsilon}\left( \beta \int \!  u(x_1+z) \,\dif \mu_v (z) \right) + \alpha{\eta}_1,
\end{align*}
which can readily be rearranged as
\[
u(x_1) \leq \sup_{v \in \partial B_\epsilon} \int \!  u(x_1+z) \,\dif \mu_v (z) + \frac{\alpha{\eta}_1}{\beta}.
\]
This in turn implies that for any small $\tilde{\eta}_1 > \alpha {\eta}_1 / \beta$ there must exist $v_1 \in \partial B_\epsilon$, for which
\[
u(x_1) - \tilde{\eta}_1 \leq \int \!  u(x_1+z) \,\dif \mu_{v_1} (z).
\]
For our proof, we can choose $\tilde{\eta}_1 = 2 \alpha \eta_1 / \beta$.

We now intend to argue that on the support of the measure $\mu_{v_1}$, we can choose a point $x_2 $ which is at a large enough distance from $x_1$ and at which the function $u$ attains a large enough value. 

Let $0 < \lambda_1 < 1,$ where $\lambda_1\epsilon$ is our desired minimum bound on the distance $||x_2 - x_1||$, to be specified later. We denote 
\[
S_1 := \{ x_2 \in \spt \mu_{v_1} \, | \, ||x_2 - x_1|| \geq \lambda_1 \text{ and } \langle x_1, x_2 - x_1\rangle \geq 0 \}.
\]
We then have
\[
\mu_{v_1}(S_1) = \frac{1 - (\lambda_1)^{n-1}}{2}.
\]
We furthermore denote $\kappa_1 =:  \mu_{v_1}(S_1)$.

Since $u \leq \sup u$, the following is true: On the support of $\mu_{v_1}$ there cannot exist a (measurable) subset $K$ of measure $\kappa_1$, on which 
\begin{equation}\label{kappaat}
u < \sup u - \frac{1 + \alpha}{\beta\kappa_1}\eta_1.
\end{equation}
To prove this, we first estimate
\begin{align*}
 \sup u &\leq u(x_1) + \eta_1  \\ &\leq \int \!  u(x_1+z) \,\dif \mu_{v_1} (z) + \frac{2 \alpha}{\beta}\eta_1+  \eta_1 \\
&= \int_K \!  u(x_1+z) \,\dif \mu_{v_1} (z) + \int_{K^c} \!  u(x_1+z) \,\dif \mu_{v_1} (z) + \frac{1 + \alpha}{\beta}\eta_1\\
&\leq \int_K \!  u(x_1+z) \,\dif \mu_{v_1} (z) + (1-\kappa_1) \sup u + \frac{1 + \alpha}{\beta}\eta_1,
\end{align*}
implying
\[
\sup u - (1-\kappa_1) \sup u \leq \int_K \!  u(x_1+z) \,\dif \mu_{v_1} (z) +\frac{1 + \alpha}{\beta}\eta_1. 
\]
Using (\ref{kappaat}) we further have
\begin{align*}
\kappa_1 \sup u - \frac{1 + \alpha}{\beta}\eta_1   &\leq \int_K \!  u(x_1+z) \,\dif \mu_{v_1} (z) \\ 
& < \kappa_1 \left( \sup u - \frac{1 + \alpha}{\beta\kappa_1}\eta_1 \right) \\
& = \kappa_1 \sup u - \frac{1 + \alpha}{\beta}\eta_1
\end{align*}
and thus (\ref{kappaat}) cannot hold.
Thus, we can choose a point $x_2$ for which \[ || x_1 - x_2 || \geq \lambda_1 \] and 
\[
u(x_2) \geq \sup u - \frac{1 + \alpha}{\beta\kappa_1}\eta_1
\]
We will denote
\[
\eta_2 := \frac{1 + \alpha}{\beta\kappa_1}\eta_1
\]
Repeating the previous steps, we can choose a point $x_3$ for which \[ || x_2 - x_3 || \geq \lambda_2 \] and
\[
u(x_3) \geq \sup u - \frac{1 + \alpha}{\beta\kappa_2}\eta_2
\]
where $0 < \lambda_2 < 1$  can be chosen freely and
\[
\kappa_2 = \frac{1 - (\lambda_2)^{n-1}}{2}.
\]
We now choose $\lambda_j$ such that $\kappa_j = \frac{1}{4}$ for all $j \in \N$. Then, by following the procedure outlined above, we have the following bound
\begin{equation}\label{eetaboundi}
{\eta}_j \leq \left(\frac{4 + 4\alpha}{\beta}\right)^j  {\eta}_1. 
\end{equation}
for all $j \in \N$.
With these choices we can utilize the technique in Lemma \ref{gameends} to establish a lower bound on the distance of point $x_j$ from the origin for all $j \in \N$ as follows: For $j \geq 0$

\begin{align*}
|x_{j+1}|^2 &= |x_j + x_{j+1} - x_j|^2  \\
&= \langle x_j + x_{j+1} - x_j , x_j + x_{j+1} - x_j \rangle \\
&= |x_j|^2 + |x_{j+1} - x_j|^2 + 2 \langle x_j, x_{j+1} - x_j \rangle \\
&\geq |x_j|^2 + \lambda_j.
\end{align*}

Since $\lambda_j$ are bounded uniformly away from zero, this implies that for some $k \in \N$ large enough, $x_k \in I_\epsilon$. Note that the point must pass through $I_\epsilon$ before hitting $O_\epsilon$, since $I_\epsilon$ has width $\epsilon$. Due to the bound given in (\ref{eetaboundi}), if we choose $\eta_1$ small enough in the first step, we can have the value of $u(x_k)$ to be arbitrarily close to $\sup u$, and in particular greater than $\sup F$. We have thus reduced the proof to the case $x_1 \in I_\epsilon$.

Now, from the DPP we deduce
\begin{align*}
&u(x_1) =  \frac{1 - \delta(x_1)}{2} \left[ \sup_{v \in \partial B_\epsilon}\left(\alpha u(x_1+v) + \beta  \int \!  u(x_1+z) \,\dif \mu_v (z) \right) \right. \\& + \left. \inf_{v \in \partial B_\epsilon}\left(\alpha u(x_1+v)  + \beta  \int \! u(x_1+z) \,\dif \mu_v (z) \right) \right] + \delta(x_1)F(x_1) \\
&\leq (1-\delta(x_1))\sup_{v \in \partial B_\epsilon}\left(\alpha  u(x_1+v) + \beta \int \!  u(x_1+z) \,\dif \mu_v (z) \right) + \delta(x_1) \sup F\\
&\leq  (1-\delta(x_1))\sup_{v \in \partial B_\epsilon}\left(\alpha u(x_1) + \beta  \int \!  u(x_1+z) \,\dif \mu_v (z) \right) + \delta(x_1) u(x_1),
\end{align*}
which implies
\[
u(x_1) \leq \sup_{v \in B_\epsilon} \int \!  u(x_1+z) \,\dif \mu_v (z),
\]
as before. Now, if we keep sure that $u(x_j) > \sup F$, we can continue choosing points as before. Eventually, for some $k \in \N$ we will have $x_k \in O_\epsilon$ and $u(x_k) > \sup F$, which is a contradiction, since $u = F$ in $O_\epsilon$.
\end{proof}
As an immediate corollary, we have the following.
\begin{lemma}\label{vikalemma}
For any measurable function $u$ satisfying the DPP, we have
\[
\underline{u} \leq u \leq \overbar{u},
\]
where $\underline{u}$ and $\overbar{u}$ are the lower and upper lower semicontinuous functions obtained in Theorem \ref{exis}.
\end{lemma}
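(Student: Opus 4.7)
The plan is to combine the sandwich bound $\inf F \leq u \leq \sup F$ from the preceding theorem with the monotonicity of $I$ and the definition of $\underline{u}$ and $\overbar{u}$ as iterates of $I$ starting from the constants $\inf F$ and $\sup F$ respectively.

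First I would observe that since $u$ satisfies the DPP, it is a fixed point of $I$, i.e.\ $Iu = u$, and hence $I^n u = u$ for every $n \in \N$ by induction. Next, the preceding theorem gives
\[
\inf F \leq u(x) \leq \sup F \quad \text{for all } x \in \Omega_\epsilon.
\]
Applying the monotone operator $I$ iteratively to this chain of inequalities yields
\[
I^n(\inf F)(x) \leq I^n u(x) = u(x) \leq I^n(\sup F)(x)
\]
for every $n \in \N$ and every $x \in \Omega_\epsilon$.

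Finally, I would let $n \to \infty$. By the construction in Section~2 (and Remark \ref{uepis}), the left-hand side converges pointwise to $\underline{u}$ and the right-hand side converges pointwise to $\overbar{u}$. This immediately gives $\underline{u} \leq u \leq \overbar{u}$, as desired. There is no real obstacle here: monotonicity of $I$ was recorded in Section~2, the sandwich bound was just proved, and the convergence of the iterates to $\underline{u}$ and $\overbar{u}$ is precisely how those two functions were defined.
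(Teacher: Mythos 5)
Your proposal is correct and follows essentially the same route as the paper: the maximum principle $\inf F \leq u \leq \sup F$, monotonicity of $I$, the fixed-point identity $I^n u = u$, and passage to the limit $n \to \infty$. If anything, you are slightly more explicit than the paper in citing the maximum principle as the starting point of the monotone iteration, which the paper leaves implicit by presenting the lemma as an immediate corollary of that theorem.
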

\begin{proof}
By monotonicity of operator $I$ as defined in (\ref{tildei}), we have
\begin{equation}\label{boreley}
\underline{u} = I^n \inf F \leq I^n u \leq I^n \sup F = \overbar{u}.
\end{equation}
Since $u$ satisfies the DPP, $\tilde{I} u = u$. Thus, taking the limit $n \to \infty$ in (\ref{boreley}) gives the claim.  
\end{proof}
Recalling (\ref{ovatsamat}), we thus have
\begin{proclaim}\label{tokavika}
A measurable function $u: \Omega \to \R$ satisfying the DPP with given boundary data is unique.
\end{proclaim}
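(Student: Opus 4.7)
The plan is to read off the statement as a direct corollary of the chain of inequalities already assembled just before it. Recall that combining \eqref{peliepis}, Remark \ref{uepis}, and Theorems \ref{uniq} and \ref{uniq2} gave
\[
\uii \leq \underline{u} \leq \overbar{u} \leq \ui \leq \uii,
\]
so all four functions coincide; in particular $\underline{u} = \overbar{u}$.

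Given this, I would simply invoke Lemma \ref{vikalemma}. Let $u \colon \Omega_\epsilon \to \R$ be any measurable function satisfying the DPP with the prescribed boundary data $F$. By the previous section's maximum principle, $\inf F \leq u \leq \sup F$, and the iteration argument of Lemma \ref{vikalemma} then yields
\[
\underline{u} \leq u \leq \overbar{u}.
\]
Since $\underline{u} = \overbar{u}$ by the remark above, the two bounds squeeze $u$ to a single function, forcing $u = \underline{u} = \overbar{u}$. This gives uniqueness.

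Thus there is essentially no new work to do here beyond citing the earlier results in the correct order. The one thing to note is that Lemma \ref{vikalemma} relied on $u$ being bounded between $\inf F$ and $\sup F$ (so that the iteration $I^n u$ is sandwiched between $I^n \inf F$ and $I^n \sup F$), which is exactly the content of the preceding maximum-principle theorem of this section; this is where the genuine argument was carried out, and the present theorem only packages it. The main ``obstacle'' is therefore conceptual rather than technical: one must recognize that the inequality $\ui = \uii$, established by strategic play in the tug-of-war game, together with the sandwich $\uii \leq \underline{u} \leq \overbar{u} \leq \ui$, forces the two semicontinuous solutions to agree, and hence all measurable solutions to agree with them.
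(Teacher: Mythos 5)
Your argument is exactly the paper's: the chain $\uii \leq \underline{u} \leq \overbar{u} \leq \ui \leq \uii$ forces $\underline{u} = \overbar{u}$, and Lemma \ref{vikalemma} (which indeed rests on the maximum principle of this section) sandwiches any measurable solution between them. Correct, and the same route as the paper.
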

Finally, since $u$ is thus both lower and upper semicontinuous, we can conclude
\begin{proclaim}
A measurable function satisfying the DPP (\ref{DPP}) is continuous.
\end{proclaim}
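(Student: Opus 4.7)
The plan is to argue directly from Lemma \ref{vikalemma} together with the chain of inequalities collected immediately after Theorem \ref{uniq2}. Let $u \colon \Omega_\epsilon \to \R$ be an arbitrary measurable solution of the DPP with the prescribed boundary data $F$. By Lemma \ref{vikalemma} we have the sandwich
\[
\underline{u}(x) \leq u(x) \leq \overbar{u}(x) \qquad \text{for every } x \in \Omega_\epsilon,
\]
where $\underline{u}$ and $\overbar{u}$ are the lower and upper semicontinuous solutions produced in Theorem \ref{exis}.

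The next step is to observe that these two envelopes actually coincide. Indeed, combining Remark \ref{uepis}, the game-theoretic inequality (\ref{peliepis}), and Theorems \ref{uniq} and \ref{uniq2} yields
\[
\uii \leq \underline{u} \leq \overbar{u} \leq \ui \leq \uii,
\]
so all five functions are equal; in particular $\underline{u} \equiv \overbar{u}$. Combining this identity with the sandwich above forces $u = \underline{u} = \overbar{u}$ pointwise on $\Omega_\epsilon$.

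To conclude, I invoke the elementary fact that a function that is simultaneously lower semicontinuous and upper semicontinuous is continuous: since $u$ equals $\underline{u}$ it is lower semicontinuous, and since $u$ equals $\overbar{u}$ it is upper semicontinuous, hence continuous on $\Omega_\epsilon$. There is no serious obstacle here; the entire argument is a bookkeeping step that ties together the existence result of Section~2, the game-theoretic comparison of Section~3, and the maximum-principle bound of the current section. The only point to double-check is that Lemma \ref{vikalemma} genuinely applies to \emph{every} measurable solution (which it does, because its proof only uses monotonicity of $I$ and the identity $Iu = u$ implied by the DPP, both of which hold for any measurable solution bounded between $\inf F$ and $\sup F$, a bound guaranteed by the previous theorem in this section).
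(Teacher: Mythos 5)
Your argument is correct and is essentially the paper's own: Lemma \ref{vikalemma} sandwiches any measurable solution between $\underline{u}$ and $\overbar{u}$, the game-theoretic chain $\uii \leq \underline{u} \leq \overbar{u} \leq \ui \leq \uii$ collapses the two envelopes, and a function that is both lower and upper semicontinuous is continuous. Your closing remark correctly identifies that applying Lemma \ref{vikalemma} requires the maximum-principle bound from the preceding theorem, which is exactly how the paper structures the section.
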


\end{document}